\documentclass[11pt,reqno]{amsart}
\usepackage[T1]{fontenc}
\usepackage{amsmath,amssymb,amsthm,mathtools}
\usepackage[a4paper,margin=29mm]{geometry}
\usepackage[colorlinks=true,linkcolor=blue,citecolor=blue,urlcolor=blue]{hyperref}
\hypersetup{pdftitle={A Near-Quadratic Lower Bound for Sets with No Unique Sums},pdfauthor={Jianfeng Hou and Kai Yang}}
\allowdisplaybreaks
\newtheorem{theorem}{Theorem}[section]
\newtheorem{lemma}[theorem]{Lemma}
\newtheorem{proposition}[theorem]{Proposition}
\newtheorem{corollary}[theorem]{Corollary}
\theoremstyle{definition}
\newtheorem{definition}[theorem]{Definition}
\newtheorem{question}[theorem]{Question}
\theoremstyle{remark}

\numberwithin{equation}{section}
\newcommand{\Z}{\mathbb Z}
\newcommand{\Q}{\mathbb Q}
\newcommand{\F}{\mathbb F}
\DeclareMathOperator{\rk}{rank}
\DeclareMathOperator{\supp}{supp}

\newcommand{\one}{\mathbf 1}
\newcommand{\ee}{\mathrm e}
\newcommand{\norm}[1]{\lVert#1\rVert}
\title[Near-quadratic bounds for sets with no unique sums]{A Near-Quadratic Lower Bound for Sets with No Unique Sums}
\author[J. Hou]{Jianfeng Hou\textsuperscript{1}}
\thanks{\textsuperscript{1}Fuzhou University. Email: \href{mailto:jfhou@fzu.edu.cn}{\nolinkurl{jfhou@fzu.edu.cn}}.}
\author[K. Yang]{Kai Yang\textsuperscript{2}}
\thanks{\textsuperscript{2}Fuzhou University. Email: \href{mailto:2600310016@fzu.edu.cn}{\nolinkurl{2600310016@fzu.edu.cn}}.}
\date{}
\subjclass[2020]{11B13, 11B34}
\keywords{Unique sums, finite fields, collision lattices, integer compression, balanced sets}
\begin{document}
\begin{abstract}
Let $m(p)$ be the least size of a subset of $\F_p$ with at least two
elements for which every sum has two distinct representations as unordered
pairs, allowing repetition. We prove that, for every prime $p\ge64$,
\[
 m(p)\ge2^{-80}\left(\frac{\log p}{\log\log p}\right)^2.
\]
The argument compresses the full integer collision lattice by unit-pivot
elimination. A shared random sample and forests of bounded diameter give
$O(\sqrt n+n/\log p)$ surviving coordinates of polynomial height for a
minimal set of size $n$. A nonzero minor divisible by $p$ then gives the
lower bound. We also construct weakly ternary-balanced seeds yielding
\[
 m(p)\le\frac{(\log p)^2}{2(\log3)^2}
 +\left(\frac1{4\log3}+o(1)\right)
 \frac{(\log p)^2}{\log\log p}.
\]
Consequently $m(p)=(\log p)^{2+o(1)}$ as $p$ tends to infinity through
the primes. The constant-factor order of $m(p)$ remains undetermined.
\end{abstract}
\maketitle

\section{Introduction}\label{sec:introduction}
For a set $A\subseteq\F_p$, define
\[
 r_{A,2}(s)=\#\{\{a,b\}:a,b\in A,\ a+b=s\},
\]
where $\{a,b\}$ is an unordered two-element multiset. In particular,
$\{a,b\}$ and $\{b,a\}$ give the same representation, and $\{a,a\}$
is allowed. We say that $A$ has \emph{no unique sums}, or is \emph{NUS},
if $|A|\ge2$ and $r_{A,2}(s)\ge2$ for every $s\in A+A$. For odd primes put
\[
 m(p)=\min\{|A|:A\subseteq\F_p\text{ is NUS}\}.
\]
This minimum exists, since $\F_p$ itself is NUS. Indeed, for
each $s\in\F_p$ there are $p$ ordered solutions of $a+b=s$,
exactly one of which is diagonal. Identifying the two orders of
each nondiagonal pair leaves $(p+1)/2\ge2$ unordered representations.

Problem~27 in Green's list~\cite{Green} asks for the order of magnitude
of $m(p)$. Bedert~\cite{Bedert} established
\begin{equation}\label{eq:bedert}
 \log p\,\frac{\sqrt{\log^{(3)}p}}{\log^{(4)}p}
 \ll m(p)\le\left(\frac12+o(1)\right)(\log_2p)^2,
\end{equation}
where $\log^{(j)}$ denotes $j$ iterates of the natural logarithm.
Cao and Yuan~\cite{CY} subsequently proved the lower bound
$m(p)\gg\log p\log\log p$. Their preprint also gives
\begin{equation}\label{eq:cy-upper}
 m(p)\le\frac{(\log p)^2}{2(\log3)^2}
 +\left(\frac2{\log3}+o(1)\right)
 \frac{(\log p)^2}{\log\log p}.
\end{equation}

Related determinant methods occur in the two-set unique-sum problem.
Leung and Schmidt~\cite{LS} study pairs $A,B$ in finite abelian groups
for which no element of $A+B$ has a unique representation by an element
of $A\times B$. They use sparse linear systems, determinant estimates
and Smith normal forms to obtain structural information about such
pairs. This provides background for the use of integer collision
relations in the present problem.

Our main result is the following estimate.
\begin{theorem}\label{thm:main-lower}
For every prime $p\ge64$ and every NUS set $A\subseteq\F_p$,
\begin{equation}\label{eq:unconditional-lower}
 |A|\ge2^{-80}\left(\frac{\log p}{\log\log p}\right)^2.
\end{equation}
\end{theorem}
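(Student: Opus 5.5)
We will argue by contradiction from a \emph{minimal} NUS set. Among all NUS sets in $\F_p$ we fix one, $A=\{a_1,\dots,a_n\}$, of least size $n=m(p)$. We may translate and dilate freely, so we normalise $a_1=0$ and $a_2=1$.

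\textbf{Step 1: the collision lattice.} Since $A$ is NUS, every sum $a_i+a_j$ has a second representation $a_k+a_l$ with $\{k,l\}\ne\{i,j\}$. Each such coincidence gives an integer relation $e_i+e_j-e_k-e_l$ on the coordinate vector. Let $M$ be the integer matrix whose rows are all these relations, and let $L\subseteq\Z^n$ be its kernel lattice. Then the vector $x=(a_1,\dots,a_n)$ lifts to a solution of $Mx\equiv0\pmod p$.

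We claim that every integer solution of $Mx=0$ gives a NUS multiset in $\Z$. Over $\Z$ (or $\Q$), however, a NUS set of distinct elements cannot exist, because the largest sum $\max+\max$ is unique. So the solutions of $Mx=0$ over $\Q$ collapse, and some coordinates must coincide. Minimality of $A$ is used to show that the rational solution space is forced to be very small, essentially the constants $\Q\one$. After the normalisation, this means $M$ has rank $n-1$ over $\Q$. Over $\F_p$, by contrast, the normalised system has the nonconstant solution $x$. Hence some $(n-1)\times(n-1)$ minor of $M$, restricted to the columns $2,\dots,n$, is a nonzero integer divisible by $p$. This is the mechanism that ties $p$ to $n$.

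\textbf{Step 2: compression by unit pivots.} A naive Hadamard bound on this minor only gives $p\le 4^{n}$, which is too weak. Instead we eliminate variables. Each relation has coefficients $\pm1$, so whenever a relation contains a variable with coefficient $\pm1$ that does not occur elsewhere in the same row, we can solve for that variable as an integer combination of the others. This unit-pivot elimination does not change the determinant, up to sign, or divisibility by $p$.

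We then choose a shared random sample $S$ of about $\sqrt n+n/\log p$ indices to keep. For each remaining index we find a collision chain that expresses it in terms of $S$, and we organise these chains as a forest of bounded depth $D$ rooted in $S$. Each elimination along a path of length $D$ multiplies the coefficient sizes by at most a constant factor, so the surviving coordinates have height $n^{O(1)}$ provided $D=O(\log n)$-free bounds hold, i.e.\ polynomial height.

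\textbf{Step 3: the determinant bound.} The compressed system has $k=O(\sqrt n+n/\log p)$ unknowns and integer coefficients of size $n^{O(1)}$. It still has a nonzero $k\times k$ minor divisible by $p$. Hadamard's inequality then gives
\[
p\le \bigl(n^{O(1)}\sqrt k\bigr)^k,\qquad\text{so}\qquad \log p\ll k\log n\ll(\sqrt n+n/\log p)\log n .
\]
If the term $n/\log p$ dominates, we get $n\gg(\log p)^2/\log n$, which is stronger than needed. Otherwise we get $\sqrt n\gg\log p/\log n$, that is, $n\gg(\log p/\log\log p)^2$. Tracking the constants through the argument yields $2^{-80}$, and the condition $p\ge64$ handles the small cases.

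\textbf{Main obstacle.} The hardest step is Step 2: showing that a random sample of size $O(\sqrt n)$ supports spanning forests of bounded diameter using only unit pivots. Concretely, every non-sampled element must be reachable from $S$ within a bounded number of collision steps. For this we would exploit that the collision graph is dense: each of the $\binom{n+1}2$ sums carries at least two representations. A birthday-type argument should then show that $\sqrt n$ random elements meet a collision involving any given element with positive probability. We would try a two-level scheme, in which each element is joined to the sample via a single collision with the other two entries already resolved, and iterate over a constant number of levels.
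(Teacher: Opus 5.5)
Your outer frame matches the paper: a minimal NUS set, rational rank $n-1$ against modular rank at most $n-2$, and finally a nonzero minor divisible by $p$ bounded by row norms. For the rational rank, the argument that actually works is that the maximum layer of any rational kernel vector is itself NUS, hence all of $A$ by minimality. The gap is Step~2, which carries the whole proof. You flag it as the obstacle, but the route you sketch does not work.

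The birthday count fails. To solve for an unsampled $a$ from $a+b=c+d$ in terms of $S$ you need $b,c,d\in S$. With $|S|\asymp\sqrt n$ and only the roughly $n$ collisions through $a$ that the NUS property guarantees, the expected number of usable collisions is about $n\cdot n^{-3/2}=n^{-1/2}$.

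The picture also proves too much. A forest of bounded depth with a constant factor per step gives every point height $O(1)$. The $\ell_1$ row-norm bound then gives $\log p\ll K\ll\sqrt n+n/\log p$, hence $n\gg(\log p)^2$, which is exactly the open Question~\ref{qu:quadratic}. Your sentence on heights mixes two regimes: bounded depth would give height $O(1)$, and only depth of order $\log n$ gives $n^{O(1)}$.

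The missing ingredient is the iterative compression of Theorem~\ref{thm:compression}, which never resolves a point into the sample in one step. It only needs \emph{edges}, meaning relations whose active part is $x_i-x_j$ or $x_i$. It merges coordinates along forests of diameter at most three (Lemma~\ref{lem:short-forest}). Each round shrinks the active count by a factor $2/3$ at a height cost of $60$, so $O(\log n)$ rounds give height $n^{18}$; this is the source of the $\log n$ loss.

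A collision yields an edge once only its partner and one target are promoted, not all three other entries. A common sample of $O(\sqrt m)$ blocks achieves this for all but a small fraction of active coordinates. This requires each coordinate to have at least $m/8$ records with distinct partners and distinct target types. Because targets overlap, this needs the two-sample second-moment argument of Lemma~\ref{lem:sample}. Repeated target types arise because different target points can lie in the same block; these are handled instead by subtracting two collision rows with the same target.

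Finally, after merging, a collision between two blocks can have its repair inside those two blocks, and then it contributes nothing. The clean rectangle lemma (Lemma~\ref{lem:rectangle}), the second place where $p$ enters, rules this out while blocks have fewer than $\frac13\log_2p$ points. Larger blocks are promoted to cores at total cost $O(n/\log p)$. That is the real origin of the $n/\log p$ term, which in your proposal is only an unexplained sample size.
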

The constant is explicit and has not been optimized. Relative to the
scale $\log p\log\log p$, the new lower-bound scale gains the unbounded
factor $\log p/(\log\log p)^3$.

A separate construction sharpens the second term of
\eqref{eq:cy-upper}.
\begin{theorem}\label{thm:main-upper}
As $p$ tends to infinity through the primes,
\begin{equation}\label{eq:ternary-upper}
 m(p)\le\frac{(\log p)^2}{2(\log3)^2}
 +\left(\frac1{4\log3}+o(1)\right)
 \frac{(\log p)^2}{\log\log p}.
\end{equation}
\end{theorem}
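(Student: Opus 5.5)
The plan is to prove Theorem~\ref{thm:main-upper} by an explicit construction in two separate parts. The first part is a deterministic lemma. It turns an arithmetic ``seed'' modulo $p$ into an NUS set whose size is controlled by the seed's length. The second part is a counting (pigeonhole) argument showing that a seed of nearly optimal length exists for every large prime $p$.

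\emph{The seed and the lemma.} For a parameter $k$, a seed will be a nontrivial relation
\[
 \sum_{i=0}^{k} c_i 3^i\equiv 0 \pmod p
\]
with digits $c_i\in\{-1,0,1\}$, subject to a weak balance condition. The condition I intend is that every partial sum $\sum_{i\le j}c_i$ stays bounded by a small quantity $h$. The deterministic lemma should then state that a weakly balanced seed of length $k$ yields an NUS set of size
\[
 \tfrac12 k^2+O(kh+k).
\]
The set will be a union of about $k$ layers, each a short arithmetic progression in a base-$3$ scale, following the staircase shape in Bedert's construction with ratio $2$ replaced by $3$.

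Inside the layers, each sum is realized twice through the identity $3^{i+1}=3^i+2\cdot 3^i$, which produces a second representation for free. The only sums at risk of being unique are those at the top layer. There I will use the seed relation modulo $p$ to ``wrap around'' and produce the second representation. Balance is what makes this work: the boundary correction costs $O(h)$ elements per layer rather than $O(k)$. I would verify the NUS property by a direct case analysis on the layer indices of the two summands, which should be routine once the set is written down.

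\emph{Existence of a seed.} Put $k=\log_3 p+\delta$. I will count balanced ternary strings of length $k$ and apply pigeonhole to their values modulo $p$. Two strings with the same residue differ by a string with digits in $\{-2,\dots,2\}$. This difference is normalized back to digits in $\{-1,0,1\}$ by a carry, and the carry increases the length by $O(1)$.

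The number of length-$k$ strings with all partial sums bounded by $h$ is about $3^k\exp(-c k/h^2)$. We need this to exceed $p=3^{k-\delta}$, which amounts to $\delta\log 3\gtrsim k/h^2$.

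\emph{Optimization and the main obstacle.} The total size is
\[
 \tfrac12(\log_3 p+\delta)^2+O(kh)
 =\frac{(\log p)^2}{2(\log 3)^2}+\frac{\log p}{\log 3}\,\delta+O(kh).
\]
To reach the stated second term we need
\[
 \frac{\log p}{\log 3}\,\delta\approx\frac{(\log p)^2}{4\log 3\,\log\log p},
 \qquad\text{that is,}\qquad \delta\approx\frac{\log p}{4\log\log p}.
\]
At the same time the $O(kh)$ term must be negligible.

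A balance condition on partial sums, as sketched above, is too costly: it forces $h$ to be a power of $\log p$, and then the $O(kh)$ term swamps the gain. This is why the condition has to be only ``weak''. I would therefore weaken it to something like a bounded number of unbalanced windows. Each such window costs $O(1)$ in the set size, and the probability of a random string violating it decays only like $\exp(-\Theta(k/\log k))$. That trade-off is exactly what makes $\log\log p$ appear in the second term.

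Tuning this definition so that three things hold at once is the step I expect to be hardest:
\begin{itemize}
\item the lemma still yields NUS with additive loss $o\bigl((\log p)^2/\log\log p\bigr)$;
\item the pigeonhole count gives precisely the constant $\tfrac1{4\log3}$;
\item the carry normalization of the difference of two strings preserves weak balance.
\end{itemize}
My first attempt would be to define weak balance blockwise, with blocks of length about $\log\log p$, and to track the constant through an entropy estimate for the number of admissible blocks.
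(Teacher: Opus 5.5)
\textbf{There is a genuine gap.} The whole proof rests on your deterministic lemma: a weakly balanced seed of length $k$ yields an NUS set of size $\tfrac12k^2+O(kh+k)$. The proposal never supplies it. The set is not written down, ``weak balance'' is not defined, and the NUS verification is deferred as routine. That includes the wrap-around modulo $p$, which is the only place where $p$ enters.

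It cannot be routine, because the lemma proves far too much. Your own count gives $\delta\asymp k/h^2$. Combined with the lemma and the choice $h\asymp(\log p)^{1/3}$, it would yield $m(p)\le\frac{(\log p)^2}{2(\log3)^2}+O\bigl((\log p)^{4/3}\bigr)$, which is far stronger than the theorem. So the lemma carries all the difficulty.

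Your reason for weakening the partial-sum balance is also an arithmetic slip. For the target second term, $h\asymp\sqrt{\log\log p}$ already gives $\delta\lesssim\log p/(4\log\log p)$ with $kh=o\bigl((\log p)^2/\log\log p\bigr)$. Nothing forces $h$ to be a power of $\log p$.

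Nor are relations of this shape scarce: the balanced ternary expansion of $p$ is one, of length $\log_3p+O(1)$. The real obstacle is giving every point a second representation, including the points at the ends where the relation wraps around. For instance, the rounded tails $x_j\approx p/3^j$ satisfy $3x_{j+1}=x_j-c_j$. Certifying them inside $\{0,-1\}\cup\{x_j\}$ typically requires the point $-1$. But $-1$ then has no nontrivial relation $-3\equiv u+v+w\pmod p$ in that set.

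In the paper the $\log\log p$ comes from a deterministic trade-off, not from counting admissible strings. The paper reduces to a weakly ternary-balanced $C$, meaning every $x\in C$ has a nontrivial $3x=u+v+w$ in $C$. Then $C+C$ is NUS and $|C+C|\le\binom{|C|+1}{2}$, so it suffices to have $|C|\le\log_3p+(\tfrac14+o(1))\log p/\log\log p$.

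The construction of $C$ runs as follows.
\begin{itemize}
\item A core of $5b$ points near $p/3$, with $b=3^{t/2}$, has pair sums covering every correction modulo $3M$, where $M=3^t$.
\item Each transition inserts one correction point $(R+c+d)/3$ with $c,d$ in the core, contracting the interval by $5/6$ or $2/5$.
\item It then performs $t$ integral ternary subdivisions, at one point per factor $3$.
\end{itemize}
A slot of $2(t+1)$ points therefore contracts by $3^{2t+1}$. This is an overhead of about $\log_3p/(2t+1)$ points above $\log_3p$. The core, and the chains repairing its two extreme points, cost $O(3^{t/2}+t)$.

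Choosing $t=2\log_3\log p-O(\log\log\log p)$ makes the core cost $O(\log p/(\log\log p)^3)$. The slot overhead is then $(\tfrac14+o(1))\log p/\log\log p$, which is the source of the constant $\tfrac14$. To rescue your approach you would need an explicit mechanism of this kind for corrections and boundary points, with its cost counted, rather than a sharper count of seeds.
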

The leading coefficient in \eqref{eq:ternary-upper} is the same as in
Cao--Yuan's bound; the second coefficient is reduced by a factor of eight.
Both estimates hold along all sufficiently large primes.

\begin{corollary}\label{cor:exponent}
As $p$ tends to infinity through the primes,
\[
 \frac{\log m(p)}{\log\log p}\longrightarrow2,
 \qquad m(p)=(\log p)^{2+o(1)}.
\]
\end{corollary}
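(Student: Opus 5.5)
The corollary should follow by sandwiching $m(p)$ between Theorem~\ref{thm:main-lower} and Theorem~\ref{thm:main-upper} and then taking logarithms. No new combinatorial input is required; the only work is bookkeeping with the $\log\log p$ terms.

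For the lower estimate, I apply Theorem~\ref{thm:main-lower} to a minimal NUS set. For every prime $p\ge64$ this gives
\[
 \log m(p)\ge 2\log\log p-2\log\log\log p-80\log2 .
\]
Dividing by $\log\log p$, and using that $\log\log\log p/\log\log p\to0$ and $80\log2/\log\log p\to0$, I obtain
\[
 \liminf_{p\to\infty}\frac{\log m(p)}{\log\log p}\ge2 .
\]

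For the upper estimate, Theorem~\ref{thm:main-upper} gives $m(p)\le C(\log p)^2$ for all sufficiently large $p$, with a constant such as $C=1/(2(\log3)^2)+1$. This holds because the second term is $o((\log p)^2)$. Hence $\log m(p)\le 2\log\log p+\log C$, and therefore
\[
 \limsup_{p\to\infty}\frac{\log m(p)}{\log\log p}\le2 .
\]
One could instead use Bedert's upper bound in \eqref{eq:bedert}, which also has order $(\log p)^2$.

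Combining the two bounds shows that the ratio tends to $2$. This is equivalent to $\log m(p)=(2+o(1))\log\log p$, that is, $m(p)=(\log p)^{2+o(1)}$. Since both theorems are available, there is no real obstacle. The one point needing care is that the lower bound loses a factor $(\log\log p)^2$ together with the constant $2^{-80}$. Both are absorbed into the $o(1)$ in the exponent, because $\log\log\log p=o(\log\log p)$.
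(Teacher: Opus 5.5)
Your proposal is correct and matches the paper's proof: you squeeze $\log m(p)/\log\log p$ between the lower bound of Theorem~\ref{thm:main-lower} (its $2^{-80}$ and $(\log\log p)^2$ losses vanish because $\log\log\log p=o(\log\log p)$) and a bound $m(p)\le C(\log p)^2$ obtained by fixing one value of the error term in Theorem~\ref{thm:main-upper}. The paper additionally defines $\varepsilon(p)=\log m(p)/\log\log p-2$ to make the notation $(\log p)^{2+o(1)}$ explicit; your equivalence $\log m(p)=(2+o(1))\log\log p$ says the same thing.
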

\begin{proof}
Write $X=\log p$, so that $\log X=\log\log p$ tends to infinity.
The lower bound gives, for all sufficiently large primes,
\[
 \log m(p)\ge 2\log X-2\log\log X-80\log2.
\]
To use the upper bound, it is enough to fix one positive value of the
error in Theorem~\ref{thm:main-upper}. Since $\log X\ge1$ eventually,
there is an absolute constant $C_0>0$ such that $m(p)\le C_0X^2$.
Consequently
\[
 2-2\frac{\log\log X}{\log X}-\frac{80\log2}{\log X}
 \le\frac{\log m(p)}{\log X}
 \le2+\frac{\log C_0}{\log X}.
\]
Both error terms on the left and the error term on the right tend to
zero. This proves the first assertion by squeezing.

For the second assertion, define
\[
 \varepsilon(p)=\frac{\log m(p)}{\log\log p}-2.
\]
We have just proved that $\varepsilon(p)\to0$ along the primes. Since
$m(p)>0$ and $\log p>1$ in the range under consideration, exponentiating
the defining identity gives the exact equality
$m(p)=(\log p)^{2+\varepsilon(p)}$. Thus the power notation in the
statement refers to an actual error function tending to zero.
\end{proof}

\subsection{Outline of the lower-bound argument}
Choose an inclusion-minimal NUS set $A$ and put $n=|A|$ and $X=\log p$.
Its full collision lattice $\Lambda\subseteq\Z^A$ has rational rank
$n-1$, while its rank modulo $p$ is at most $n-2$. This rank difference
will supply a nonzero integer determinant divisible by $p$.

The main task is to reduce the number of coordinates while controlling
their size. We maintain a free quotient of $\Z^A$ whose basis consists
of images of actual points of $A$. The moving coordinates represent
disjoint subsets of $A$, each of size less than
$\lfloor(\log_2p)/3\rfloor$. The clean rectangle lemma forces an
escaping collision between every two such subsets. Repeated target
types give unit relations directly. When many target types are distinct,
one common sample of $O(\sqrt m)$ coordinates produces enough unit
relations among the $m$ moving coordinates.

A forest of bounded diameter then eliminates a fixed proportion of
the moving coordinates using pivots $\pm1$. No division or saturation
is needed. Iteration leaves
\[
 K=O\left(\sqrt n+\frac nX\right)
\]
coordinates, and every actual point has coordinate norm at most $n^{18}$.
The compressed rank difference and Hadamard's inequality yield
\[
 X\ll\left(\sqrt n+\frac nX\right)\log n.
\]
In the only relevant range $n<X^2$, this implies
$n\gg(X/\log X)^2$.

There are two separate estimates to keep in view. The dimension bound
controls the number of rows in the eventual determinant; the height
bound controls the size of each row. Neither estimate alone suffices.
The short-forest argument is used for both purposes: having few
components reduces dimension, while having short paths prevents a
single elimination round from increasing the height by a factor
depending on $n$. A fixed height multiplier over $O(\log n)$ rounds
then gives a polynomial height.

Inclusion-minimality enters the rational-rank argument. The combinatorial
compression instead uses the NUS property and the disjointness of the
active blocks. Keeping these roles separate is useful: a quotient is
not itself being asserted to be a new NUS set, and its coordinates
need not be elements of $\F_p$. They are integer coordinates of the
original points, equipped with a label map to $\F_p$.

The two inputs to compression address different obstructions. The
rectangle lemma rules out a pair of small blocks whose repairs remain
entirely inside that pair. It therefore supplies relations with a
different target-block pattern, but does not by itself supply unit pivots.
The sampling argument turns sufficiently many of these relations into
unit relations, each involving at most two active coordinates.
The forest then selects a compatible
subcollection of those differences. Finally, the full rational rank
ensures that the relations not used in compression still contain a
nonzero determinant of the required order. This last use of the full
collision lattice is why it is not necessary to preserve any particular
initially chosen basis of collision rows.

For reference, the principal quantities in the lower-bound proof are
collected below. The distinction between $n$ and $m$ is particularly
important: $n$ is fixed, whereas $m$ decreases during compression.
\[
\begin{array}{c|l}
\text{quantity}&\text{meaning}\\ \hline
n&\text{number of points in the original minimal NUS set}\\
X&\log p\\
\ell_*&\lfloor(\log_2p)/3\rfloor,\ \text{threshold for heavy blocks}\\
m&\text{number of active blocks in the current round}\\
H&\text{current upper bound for every core offset}\\
K&\text{rank of the final free quotient}.
\end{array}
\]
The proof first obtains estimates in the moving variables $m,H$,
then sums them in terms of $n$, and only at the last stage compares
$n$ with $X$. In particular, the construction never assumes the
lower bound that it is intended to prove.

\subsection{Organization and notation}
Section~\ref{sec:collisions} proves the rank and rectangle statements.
Section~\ref{sec:sampling} provides the common samples and the short
forest. The integer compression is carried out in
Section~\ref{sec:compression}, and Section~\ref{sec:lower} completes
the lower-bound proof. The independent upper-bound construction is in
Appendix~\ref{app:upper}. Appendix~\ref{app:elementary} records useful
finite bounds for all odd primes.

All unsubscripted logarithms are natural. Norms of integer vectors are
$\ell^1$ norms unless a subscript specifies otherwise. A subgroup
$P\subseteq\Z^A$ is \emph{primitive} if $\Z^A/P$ is torsion-free.
An \emph{actual point} is an element of the original set $A$; this
terminology distinguishes its image in a quotient from an arbitrary
integer combination of the quotient basis.

\section{Collision lattices and clean rectangles}\label{sec:collisions}
An NUS set is \emph{inclusion-minimal} if it has no proper NUS subset.
Every finite NUS set contains such a subset. Write
\[
 \Z^A_0=\left\{v\in\Z^A:\sum_{a\in A}v_a=0\right\}.
\]
The full collision lattice is generated by all rows
\begin{equation}\label{eq:actual-row}
 e_a+e_b-e_c-e_d,
 \qquad a+b=c+d,\quad \{a,b\}\ne\{c,d\},
\end{equation}
with $a,b,c,d\in A$. It lies in $\Z^A_0$ and is denoted by $\Lambda$.
The maximal-level argument below is the standard minimality argument
also used in the collision-lattice proof in~\cite[Proposition~2.3]{CY}.
We include it in the original point coordinates.

\begin{lemma}[Rational rank and modular defect]\label{lem:complete-rank}
If $A$ is inclusion-minimal NUS, then
\[
 \rk_\Q\Lambda=n-1,
 \qquad \rk_{\F_p}\Lambda\le n-2.
\]
\end{lemma}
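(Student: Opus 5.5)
The two claims are handled separately. The modular bound is the easy half. Every generator $e_a+e_b-e_c-e_d$ of $\Lambda$ lies in $\Z^A_0$ and, by the defining relation $a+b=c+d$ in $\F_p$, is annihilated by the label functional $\phi(v)=\sum_a v_a\,a\in\F_p$. Reduce $\Lambda$ modulo $p$ inside $\F_p^A$. Then its image lies in the common kernel of the two functionals $v\mapsto\sum_a v_a$ and $v\mapsto\sum_a v_a a$. Since $|A|\ge2$ and the points of $A$ are distinct in $\F_p$, the vectors $(1)_{a\in A}$ and $(a)_{a\in A}$ are linearly independent over $\F_p$. Hence the common kernel has dimension $n-2$, and $\rk_{\F_p}\Lambda\le n-2$.

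For the rational rank, note first that $\rk_\Q\Lambda\le n-1$ because $\Lambda\subseteq\Z^A_0$. For equality it suffices to show that $\Lambda^\perp$ in $\Q^A$ is spanned by the all-ones vector. So take $f\colon A\to\Q$ with $f(a)+f(b)=f(c)+f(d)$ for every collision $a+b=c+d$, $\{a,b\}\ne\{c,d\}$, and aim to show that $f$ is constant. This is the maximal-level argument. Suppose $f$ is not constant, let $M=\max f$, and put $B=\{a\in A: f(a)=M\}$, a proper nonempty subset of $A$. I will show that $B$ is NUS, contradicting inclusion-minimality.

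First, $|B|\ge2$. If $B=\{b\}$, then the sum $2b$ has a second representation $\{c,d\}\ne\{b,b\}$ with $f(c)+f(d)=2M$. This forces $c,d\in B$, so $c=d=b$, which is impossible. Next, let $s=a+b$ with $a,b\in B$. The NUS property of $A$ gives $\{c,d\}\ne\{a,b\}$ in $A$ with $c+d=s$. The rational relation then yields $f(c)+f(d)=2M$, so $f(c)=f(d)=M$ and $c,d\in B$. Thus every sum in $B+B$ has two representations inside $B$, so $B$ is NUS with $B\subsetneq A$, a contradiction. Hence $f$ is constant and $\rk_\Q\Lambda=n-1$.

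The only delicate point is the passage from rank over $\Q$ to the dual description. I would phrase it by noting that $\Lambda\otimes\Q$ has annihilator exactly the space of such $f$, so dimension counting gives rank $n-1$. No real obstacle is expected beyond that; in particular, the modular bound needs only distinctness of the points, not minimality.
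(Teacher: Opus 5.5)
Your proposal is correct and follows essentially the same route as the paper. The rational rank comes from the maximum-layer argument: the top level set of a collision-preserving function is itself NUS, which contradicts inclusion-minimality unless the function is constant. The modular bound comes from observing that both $\one$ and the label vector annihilate every collision row and are independent over $\F_p$; your common-kernel phrasing is just the dual form of the paper's right-kernel statement.
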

\begin{proof}
Let $M$ be the matrix of all actual collision rows, with columns indexed
by $A$. A vector $x=(x_a)_{a\in A}\in\Q^A$ lies in its right kernel
precisely when
\[
 x_a+x_b=x_c+x_d
 \quad\hbox{whenever}\quad a+b=c+d.
\]
For a trivial unordered representation this identity is automatic, so
it makes no difference whether zero collision rows are included in $M$.

Set $q=\max_{a\in A}x_a$ and $B=\{a\in A:x_a=q\}$. This is nonempty.
For any $a,b\in B$, the NUS property supplies a different representation
$a+b=c+d$ in $A$. The kernel identity gives $x_c+x_d=2q$. Each summand
is at most $q$, so both must equal $q$. Hence $c,d\in B$.
We have proved that every pair in $B$ has a different repair entirely
inside $B$, including diagonal pairs.

This also shows that $B$ cannot be a singleton: its only unordered pair
would have no different internal repair. Thus $B$ is itself NUS.
Inclusion-minimality forces $B=A$, and $x$ is constant. Conversely,
constants annihilate every row because its coefficients sum to zero.
The rational right kernel therefore has dimension one; rank--nullity
gives $\rk_\Q M=n-1$.

Equivalently, the rows of $M$ test all functions on $A$ that preserve
its additive collisions. Over $\Q$, minimality says that every such
function is constant. The order on $\Q$ is essential to this conclusion:
from $x_c,x_d\le q$ and $x_c+x_d=2q$ we can force both terms to be
$q$. Over $\F_p$ there is no corresponding maximum-layer argument,
and the original labels themselves give a nonconstant
collision-preserving function. The rational and modular conclusions
below are therefore compatible. The proof exploits this difference
between the two coefficient fields.

This argument uses all collision identities, rather than a prescribed
repair for each pair. When a pair belongs to the maximum layer, its
NUS repair is one of the identities annihilated by $x$, so that repair
stays in the layer. Also, minimality is applied only after checking
both requirements in the definition of NUS: the layer has at least
two points and every unordered pair in it has a different internal
representation. Thus no claim about the rank of a general, nonminimal
NUS set is needed.

Over $\F_p$, the coefficient-sum identity still shows that $\one$ is
in the right kernel. The actual-label vector $\lambda=(a)_{a\in A}$
is also in that kernel, now because the original equality $a+b=c+d$
holds in $\F_p$. If $s\one+t\lambda=0$ with $t\ne0$, every label
would equal $-s/t$, contrary to the distinctness of the elements of $A$.
If $t=0$, then $s=0$. These two vectors are independent, so the modular
right kernel has dimension at least two and the modular rank is at
most $n-2$.
\end{proof}

The next lemma is the total unimodularity property of directed
vertex--edge incidence matrices, with the matrix transposed; see
Schrijver~\cite[Chapter~19]{Schrijver}. We record its elementary
determinant proof in the row convention used here.

\begin{lemma}[Unit-incidence minors]\label{lem:incidence-minors}
Every square minor of a matrix whose rows are differences of standard basis vectors belongs to $\{0,1,-1\}$.
\end{lemma}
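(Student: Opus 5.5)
We prove the statement by induction on the size $k$ of the square submatrix. The rows are differences $e_i-e_j$ with $i\ne j$; we also allow a row to be a single $\pm e_i$ or zero. Such rows arise once columns have been deleted, and including them in the hypothesis makes the induction close. So the claim we actually prove is: for any integer matrix in which every row has at most one entry $+1$, at most one entry $-1$, and all other entries $0$, every square minor lies in $\{0,1,-1\}$. Any square submatrix of a matrix with rows $e_i-e_j$ is of this form, since restricting a row to a subset of the columns keeps both, one, or none of its two nonzero entries.

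Fix a $k\times k$ submatrix $N$ of this form. The case $k=1$ is immediate, since every entry lies in $\{0,1,-1\}$. For $k\ge2$ we split into three cases.

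\emph{Some row of $N$ is zero.} Then $\det N=0$.

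\emph{Some row of $N$ has exactly one nonzero entry, equal to $\pm1$.} Expand the determinant along that row. We get $\det N=\pm\det N'$, where $N'$ is the $(k-1)\times(k-1)$ submatrix obtained by deleting that row and the corresponding column. $N'$ is again of the required form, so induction gives $\det N'\in\{0,\pm1\}$.

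\emph{Every row of $N$ contains exactly one $+1$ and one $-1$.} Then each row sums to zero, so $N\one=0$ with $\one$ the all-ones vector of length $k$. Hence $N$ is singular and $\det N=0$.

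The three cases cover every possibility for $k\ge2$, so every square minor lies in $\{0,1,-1\}$. Transposing does not change determinants, so this is the same as total unimodularity of the directed incidence matrix in Schrijver's convention.

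The only delicate point is the choice of induction hypothesis. The class of matrices whose rows are exact differences $e_i-e_j$ is not closed under passing to square submatrices. The induction therefore has to run over the larger class above, in which rows have at most one $+1$ and at most one $-1$. Once that class is fixed, the argument is routine. In the later application, this lemma is what guarantees that the pivots in the forest eliminations are $\pm1$.
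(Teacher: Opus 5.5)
Your proof is correct and is essentially the paper's argument: the same induction over restrictions of unit differences (your class with at most one $+1$ and one $-1$ per row), with the same three cases. These are a zero row; a row with a single nonzero entry, expanded along; and all rows summing to zero, so that the all-ones vector lies in the kernel. The differences are cosmetic (the paper starts the induction at $k=0$), but your closing remark misplaces the application. The paper uses this lemma for the determinant bounds in the clean rectangle lemma and in the balanced-set lower bound, whereas the $\pm1$ pivots of the forest elimination come directly from the triangular leaf-removal order.
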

\begin{proof}
A restriction of a row $e_u-e_v$ to selected columns has either no
nonzero entries, one entry equal to $1$ or $-1$, or two entries equal
to $1$ and $-1$. In particular the assertion remains meaningful when
one or both endpoints lie outside the selected column set, or $u=v$.

Induct on the order $k$ of the selected square submatrix. For $k=0$
the determinant is $1$. A zero row gives determinant zero. If a row
has exactly one nonzero entry, expansion along that row expresses the
determinant as $1$ or $-1$ times a determinant of order $k-1$; the
remaining rows are still restrictions of unit differences, so induction
applies. In the only remaining case every row has two nonzero entries.
Its row sum is then zero, and the all-ones column vector is a nonzero
kernel vector. The determinant is zero in this case as well.
\end{proof}

\begin{lemma}[Clean rectangle]\label{lem:rectangle}
Let $I,J\subset\F_p$ be nonempty and disjoint. Suppose every pair in $I\times J$ has a different representation inside the same rectangle, with one endpoint in each side. Then
\[
 p\le\binom{|I|+|J|-2}{|I|-1}\le2^{|I|+|J|-2}.
\]
\end{lemma}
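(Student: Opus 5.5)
The plan is to imitate Lemma~\ref{lem:complete-rank} on a bipartite collision matrix and then bound its minors with Lemma~\ref{lem:incidence-minors}. Put $k=|I|$ and $l=|J|$. The binomial coefficient $\binom{k+l-2}{k-1}$ is monotone in each of $k$ and $l$. So I may first replace $I\times J$ by an inclusion-minimal nonempty subrectangle $I'\times J'$ that is still \emph{clean}, meaning every pair in it has a different representation inside it. Proving the bound for $I'\times J'$ then suffices, and I rename it $I\times J$.

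Let $M$ have columns indexed by $I\cup J$. Its rows are $e_i+e_j-e_{i'}-e_{j'}$ for all collisions $i+j=i'+j'$ with $(i,j)\ne(i',j')$ inside the rectangle. A right-kernel vector is a pair of functions $x$ on $I$ and $y$ on $J$ with $x_i+y_j=x_{i'}+y_{j'}$ on collisions.

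\textbf{Rational rank is $k+l-2$.} The maximum of $x_i+y_j$ over $I\times J$ is attained exactly on the rectangle $I_{\max}\times J_{\max}$ of maximisers of $x$ and of $y$. If a pair in this rectangle has a repair, the repair has the same sum and each summand is bounded by the corresponding maximum. Hence the repair also lies in $I_{\max}\times J_{\max}$, so that subrectangle is clean. Minimality forces $x$ constant on $I$ and $y$ constant on $J$. Conversely, both vectors $(\one_I,0)$ and $(0,\one_J)$ lie in the kernel, because each row has coefficient sum zero separately on $I$ and on $J$. The rational kernel is therefore exactly $2$-dimensional.

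\textbf{Modular rank is at most $k+l-3$.} Over $\F_p$ the label vector $(i)_{i\in I}\oplus(j)_{j\in J}$ is also in the kernel. It is independent of the two block indicators unless $I$ and $J$ are both singletons. That case is impossible, since then the unique pair would have no different repair.

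Next I delete one column from $I$ and one from $J$. Within each block the columns sum to zero, so the deleted columns lie in the span of the remaining ones. The reduced matrix $M'$ therefore still has rational rank $k+l-2$, which equals its number of columns. Choose $k+l-2$ rows of $M'$ giving a nonzero integer determinant $D$. Over $\F_p$ the rank is at most $k+l-3$, so $p\mid D$, and hence $p\le|D|$.

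\textbf{Bounding $|D|$.} Each row of $M'$ splits as $(u,v)$, where $u$ is a restricted unit difference $e_i-e_{i'}$ on the $k-1$ kept $I$-columns and $v$ is one on the $l-1$ kept $J$-columns. I apply the generalized Laplace expansion along the first $k-1$ columns:
\[
D=\sum_S\pm\det U_S\,\det V_{S^c},
\]
where $S$ runs over the $(k-1)$-subsets of rows. Each factor lies in $\{0,\pm1\}$ by Lemma~\ref{lem:incidence-minors}. This gives
\[
|D|\le\binom{k+l-2}{k-1}\le2^{k+l-2},
\]
where the final step is the crude bound by the total number of subsets.

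The main obstacle is the rational-rank step. The rectangle is not assumed minimal, so the passage to a minimal clean subrectangle, together with monotonicity of the bound, is essential. One must also check that the set of maximisers of $x_i+y_j$ is itself a rectangle, so that minimality among rectangles applies.
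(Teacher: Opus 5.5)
Your proposal is correct and follows essentially the same route as the paper. Both arguments pass to a minimal repair-closed subrectangle, use the maximum-layer argument to get rational rank $|I'|+|J'|-2$, delete one anchor column per side and use the label vector to get $p\mid\det D$, and then bound $|\det D|$ by a generalized Laplace expansion into products of incidence minors. The differences are cosmetic: you take inclusion-minimality instead of minimizing $|I'|+|J'|$, and you count the modular rank instead of exhibiting the explicit kernel vector of the selected minor.
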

\begin{proof}
Call a nonempty subrectangle \emph{repair-closed} if every pair in it
has a different representation with both endpoints still on the
appropriate sides. Among repair-closed subrectangles of $I\times J$,
choose $I'\times J'$ minimizing $|I'|+|J'|$, and put $x=|I'|$,
$y=|J'|$. If one side were a singleton, cancellation would make the
sum map on the rectangle injective. Hence $x,y\ge2$.

\medskip\noindent\emph{The rational kernel.}
Use a separate coordinate for each point on each side. An actual row
is $(e_a-e_c)\oplus(e_b-e_d)$, where $a,c\in I'$, $b,d\in J'$,
and $a+b=c+d$. The two vectors that are constant on one side and zero
on the other annihilate every row. Suppose $(f,g)\in\Q^{I'}\oplus
\Q^{J'}$ is any rational right-kernel vector. Let $I''$ and $J''$
be the maximum layers of $f$ and $g$. For $a\in I''$, $b\in J''$,
any repair satisfies
\[
 f(c)+g(d)=f(a)+g(b)=\max f+\max g.
\]
Neither term can exceed its own maximum, so $c\in I''$ and $d\in J''$.
Thus $I''\times J''$ is repair-closed. Minimality implies $I''=I'$
and $J''=J'$. The kernel consists exactly of the two side constants,
and the rational row rank is $x+y-2$.

\medskip\noindent\emph{An integer minor divisible by $p$.}
Fix anchors $a_0\in I'$, $b_0\in J'$ and delete their columns.
Every row has sum zero separately on each side, so its deleted entries
are determined by its retained entries. Deletion is therefore injective
on the rational row space and preserves its dimension. Select $x+y-2$
actual rows that remain independent. They form a square integer matrix
$D$ with $\det D\ne0$.

The modular vector on the retained columns given by
\[
 (a-a_0)_{a\in I'\setminus\{a_0\}}
 \oplus(b-b_0)_{b\in J'\setminus\{b_0\}}
\]
is annihilated by $D$: subtracting a side constant does not change a
row pairing, and the actual labels annihilate every collision. This
vector is nonzero because $x,y\ge2$ and the labels on each side are
distinct. Hence $D$ is singular modulo $p$, so $p\mid\det D$ and
$p\le|\det D|$.

The same integer matrix is being used in these two fields. Its
determinant is a nonzero integer by rational independence, and its
reduction modulo $p$ is zero by the exhibited modular kernel vector.
Reduction commutes with the determinant polynomial, so these facts
give divisibility by $p$. They do not require choosing a common
basis for the rational and modular kernels.

\medskip\noindent\emph{The determinant bound.}
Expand $\det D$ along the $x-1$ retained columns of the first side.
A term is indexed by a choice of $x-1$ rows and, up to sign, is a
product of two square incidence minors, one on each side. Each factor
has absolute value at most one by Lemma~\ref{lem:incidence-minors}.
There are $\binom{x+y-2}{x-1}$ choices, so
\[
 p\le|\det D|\le\binom{x+y-2}{x-1}.
\]
The quantity $\binom{r+s}{r}$ is nondecreasing separately in the
nonnegative integers $r,s$; this follows directly from Pascal's identity.
Since $x\le|I|$ and $y\le|J|$, it is at most the binomial coefficient
in the statement. Finally any one binomial coefficient in row
$|I|+|J|-2$ of Pascal's triangle is at most the sum of that row,
namely $2^{|I|+|J|-2}$.

The Laplace expansion above can be written explicitly. Write
$D=[U\ V]$, where $U$ has $x-1$ columns and $V$ has $y-1$ columns,
and put $k=x+y-2$. Then
\[
 \det D=\sum_{\substack{S\subseteq\{1,\ldots,k\}\\|S|=x-1}}
       \varepsilon_S\det U_{S,*}\det V_{S^c,*},
 \qquad \varepsilon_S\in\{1,-1\}.
\]
The row of $U$ associated with a collision is the restriction of
$e_a-e_c$, and the corresponding row of $V$ is the restriction of
$e_b-e_d$. Thus the incidence-minor lemma applies separately to both
factors in each term. The original rows of $D$ can have four nonzero
entries; it is their decomposition into two incidence blocks that
gives this sharper bound. Taking absolute values removes the signs
$\varepsilon_S$, and the number of row subsets is the only remaining
factor.
\end{proof}

We shall use the rectangle lemma in its contrapositive form. If
$2^{|I|+|J|-2}<p$, there is at least one pair $(a,b)\in I\times J$
with no different representation in that rectangle. If $I,J$ lie
inside an NUS set, that particular pair still has a repair in the
whole set, so every such repair leaves the rectangle. Only one
escaping record per pair of blocks is needed. No assertion that
every cross pair escapes is required.

\section{Shared samples and short forests}\label{sec:sampling}
The first lemma selects a single set that works for many families at
once. Within a family both the first labels and the target multisets
must be distinct; repeated target types will be handled separately in
the compression argument.
\begin{lemma}[Simultaneous configuration sample]\label{lem:sample}
Let $V$ have $m\ge2^{32}$ elements. A configuration is $(j,\tau)$, where $\tau$ is a two-element multiset in $V$ and $j\notin\supp\tau$. Suppose there are at most $m$ families, each containing at least $m/8$ configurations, with distinct $j$'s and distinct $\tau$'s within each family. There is a common $T\subseteq V$ such that
\[
 |T|\le4096\sqrt m,
\]
and at most $m/8$ families fail to contain a configuration with $j\in T$ and $\supp\tau\cap T\ne\varnothing$.
\end{lemma}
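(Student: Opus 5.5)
The plan is a two-round random sample combined with a second-moment estimate, with one combinatorial device to control the variance.

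\textbf{Setup.} Fix $c=1024$ and $q=c/\sqrt m$, which is less than $1$ because $m\ge2^{32}$. Choose independent random subsets $T_1,T_2\subseteq V$, each containing every element independently with probability $q$, and put $T=T_1\cup T_2$. Since $|T|$ is a sum of independent indicators with mean at most $2c\sqrt m=2048\sqrt m$, a Chernoff bound gives $\Pr\bigl(|T|>4096\sqrt m\bigr)\le e^{-\Omega(\sqrt m)}$. This is tiny for $m\ge2^{32}$.

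For a configuration $(j,\tau)$ we aim for the stronger event that $j\in T_1$ and a designated endpoint $w(\tau)\in\supp\tau$ lies in $T_2$. This event implies the required one. Because $T_1$ and $T_2$ are independent, no interference can arise between first labels and support points. We never need $j\ne w$ for this, although it holds anyway since $j\notin\supp\tau$.

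\textbf{Choice of the designated endpoint.} Fix a family $\mathcal F$ and keep exactly $k=\lceil m/8\rceil$ of its configurations; the family has at least $m/8$ of them and $k\le m$. The targets $\tau$ are distinct two-element multisets, so they form a simple graph on $V$ with loops allowed, having $k$ edges. Such a graph is $\lfloor\sqrt{2k}\rfloor$-degenerate: a subgraph of minimum degree $d$ has at least $d^2/2$ edges, up to the harmless effect of loops. Orient every edge toward its later endpoint in a degeneracy ordering, and let $w(\tau)$ be the head. Then every vertex is the designated endpoint of at most $\sqrt{2k}+1\le2\sqrt m$ targets. This device is the crux of the proof. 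With a careless choice, a star whose centre is designated for $\gg m$ edges forces variance of the same order as the squared mean. The degeneracy orientation reduces the collision count to $\sum_w d_w^2\le2\sqrt m\,k$.

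\textbf{Second moment.} Let $Z$ be the number of retained configurations with $j\in T_1$ and $w(\tau)\in T_2$, so that $\mathbb E Z=q^2k\ge c^2/8$. Consider two distinct configurations. Their $j$'s are distinct, so they are correlated only through a shared designated endpoint, and then the probability that both occur is $q^3$. Hence
\[
 \operatorname{Var}Z\le\mathbb E Z+q^3\sum_w d_w^2\le\mathbb E Z+2q^3k\sqrt m .
\]
Chebyshev's inequality then gives
\[
 \Pr(Z=0)\le\frac1{\mathbb E Z}+\frac{2q^3k\sqrt m}{q^4k^2}\le\frac8{c^2}+\frac{16}{c}<\frac1{60}.
\]
Therefore the expected number of failing families is below $m/60$. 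By Markov's inequality, more than $m/8$ failures occur with probability below $8/60$. Together with the size estimate for $|T|$, some outcome satisfies both requirements, which yields the lemma.

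The main obstacle is the variance control. The choice of designated endpoint via degeneracy is the step to get right. The remaining steps are routine constant checking, where the hypothesis $m\ge2^{32}$ absorbs the Chernoff tail and the rounding of $k$.
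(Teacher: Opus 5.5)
Your proof is correct, and it takes a different route from the paper's.

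\textbf{The paper's route.} The paper never designates a single target vertex. For a fixed family it studies the random coverage $Z=|U(X)|$: the union of the supports of all configurations whose label lies in the first sample $X$. It proceeds in three steps:
\begin{itemize}
\item it bounds $\mathbb E Z\ge\sqrt m/32$ by splitting into high- and low-degree vertices, using that at most $|H|^2$ distinct supports fit inside a set $H$;
\item it proves $\operatorname{Var}Z\le2\,\mathbb E Z$ by Efron--Stein tensorization together with a self-bounding count of the losses from deleting a single label;
\item conditioning on $X$, it lets the independent second sample miss $U(X)$ with probability $(1-\theta)^Z\le\ee^{-8}$.
\end{itemize}

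\textbf{Your route.} You remove the overlap problem deterministically, before any randomness. Distinctness of the targets makes the target graph $\lfloor\sqrt{2k}\rfloor$-degenerate. Hence each target receives a representative vertex, and every vertex represents at most $\sqrt{2k}+1$ targets. A plain Chebyshev bound on a sum of indicators then suffices, because two configurations are correlated only through a shared representative. To get that load bound, orient each edge towards the endpoint removed \emph{first} in the peeling order, so that in-degrees are the bounded quantity; under the other common convention for ``degeneracy ordering'', a star centre would carry unbounded load.

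\textbf{Comparison.} The combinatorial input is the same in both proofs: distinct two-element supports cannot pile up on few vertices. You spend it on load balancing, while the paper spends it on bounding the mean of the coverage.
\begin{itemize}
\item Your argument is more elementary, and its constants are easy to check.
\item The paper's argument keeps every support vertex in play and obtains an exponentially small conditional miss probability. This is stronger than required but needs the variance machinery.
\end{itemize}
Either per-family failure bound ($<1/60$ for you, $<1/128$ in the paper) is well below the $1/8$ needed for the final Markov step. The size bound on $T$ also goes through in both cases, by Chernoff or simply by Markov.
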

\begin{proof}
We use two independent samples so that activating a target and hitting
that target are separate random operations. Choose $X,Y\subseteq V$
independently, with each vertex included in either sample independently
with probability
\[
 \theta=\frac{512}{\sqrt m},
 \qquad T=X\cup Y.
\]
The hypothesis $m\ge2^{32}$ ensures $0<\theta<1$. The same pair
$(X,Y)$ will be used for every family.

For a single configuration, requiring both its label and a target
vertex to be sampled suggests a probability of order $\theta^2$.
However, many configurations can share a target vertex, so treating
their success events as independent would be unjustified. The random
union $U(X)$ below collects all activated targets first. Conditional
on that union, independence of the second sample gives an exact
probability of missing it. This is the reason for using two samples.

\medskip\noindent\emph{Coverage for one family.}
Fix a family $\mathcal F$ with $a$ configurations, and write
$S_j=\supp\tau_j$ for the support of the type paired with label $j$.
The labels are distinct, so their inclusion indicators in $X$ are
independent. Each $S_j$ is nonempty and has size at most two.
Moreover the supports are distinct: a two-element multiset with
singleton support is necessarily $\{v,v\}$, and one with two-element
support is necessarily $\{v,w\}$. Thus its support determines the type.

Let
\[
 U(X)=\bigcup_{j\in X,\ (j,\tau_j)\in\mathcal F}S_j,
 \qquad Z=|U(X)|,
 \qquad d_v=\#\{j:v\in S_j\}.
\]
A vertex $v$ is uncovered exactly when all its $d_v$ activating labels
are absent from $X$. Linearity of expectation, without any independence
assumption between coverage events for different vertices, gives
\begin{equation}\label{eq:coverage-mean-detailed}
 \mu:=\mathbb EZ
 =\sum_{v\in V}\bigl(1-(1-\theta)^{d_v}\bigr)
 \ge\frac12\sum_{v\in V}\min\{1,\theta d_v\}.
\end{equation}
Here $(1-\theta)^d\le\ee^{-\theta d}$ and
$1-\ee^{-s}\ge\tfrac12\min\{1,s\}$ for $s\ge0$.

To justify the latter elementary estimate, for $0\le s\le1$ use
$\ee^{-s}\le1-s+s^2/2$, which gives
$1-\ee^{-s}\ge s-s^2/2\ge s/2$. For $s\ge1$, monotonicity
gives $1-\ee^{-s}\ge1-\ee^{-1}>1/2$. Thus the minimum records
two regimes: a vertex with small $\theta d_v$ is covered with
probability proportional to $\theta d_v$, whereas a vertex with
large $\theta d_v$ already has coverage probability bounded below
by an absolute constant.

Put $H=\{v:\theta d_v\ge1\}$. If $|H|\ge\sqrt m/16$, the
contribution of $H$ to \eqref{eq:coverage-mean-detailed} gives
$\mu\ge\sqrt m/32$. Otherwise, the number of distinct nonempty
supports of size at most two contained in $H$ is at most $|H|^2$:
each such support is the image of an ordered pair in $H\times H$,
with equal coordinates allowed. Thus fewer than $m/256$ of our
types are contained in $H$. Since $a\ge m/8\ge m/64$, at least
$m/128$ types have a vertex outside $H$. Counting one such incidence
for each of these types yields
\[
 \sum_{v\notin H}d_v\ge\frac m{128},
 \qquad
 \mu\ge\frac\theta2\sum_{v\notin H}d_v
 \ge2\sqrt m.
\]
In both cases,
\begin{equation}\label{eq:coverage-mean-lower-detailed}
 \mu\ge\frac{\sqrt m}{32}.
\end{equation}

This dichotomy explains the square-root sampling scale. If many
vertices have high degree, their individual contributions already
give the required coverage. If few do, distinctness of the target
supports prevents all the configurations from lying among them:
only quadratically many different supports fit on a given set of
vertices. Many incidences must then fall on low-degree vertices,
where their contributions add linearly. The argument consequently
handles both heavily overlapping targets and widely spread targets
with the same choice of $\theta$.

For a small illustration of the dependence being handled, consider
two labels with target supports $\{u,v\}$ and $\{u,w\}$, where
$u,v,w$ are distinct. If both labels are activated, their union has
size three, not four. Conditional on those activations, the second
sample misses both targets with probability $(1-\theta)^3$.
Multiplying the two individual miss probabilities would instead
give $(1-\theta)^4$, which is incorrect because the targets share
$u$. The union calculation retains the correct dependence. In the
variance argument below, deleting either of the two activated labels
loses only its private vertex, $v$ or $w$; the shared vertex $u$
survives both individual deletions. This is the simplest example
of the disjoint-loss counting used for a general family.

\medskip\noindent\emph{Why the coverage has small variance.}
For a label $j$, force its activation indicator to zero and denote
the resulting loss of covered vertices by $\Delta_j$. At most the
two vertices of $S_j$ can be lost, so $0\le\Delta_j\le2$.
A vertex contributes to a loss only if $j$ is its unique selected
covering label. Such a vertex contributes to at most one of the losses.
Consequently, pointwise,
\[
 \sum_j\Delta_j\le Z,
 \qquad \sum_j\Delta_j^2\le2Z.
\]
The variance tensorization inequality for independent coordinates says
that $\operatorname{Var}Z$ is at most the sum of the expected
conditional variances in the individual coordinates. This is the
conditional-variance form of the Efron--Stein inequality; see
Boucheron, Lugosi and Massart~\cite[Section~3.1]{BLM}. The deletion-loss
estimate above is also an instance of the self-bounding method
discussed in~\cite[Section~3.3]{BLM}. To see the bound needed here,
fix all indicators except the one at $j$. If
changing that indicator from zero to one increases coverage by $d$,
the conditional variance is $\theta(1-\theta)d^2$, whereas the
conditional expectation of $\Delta_j^2$ is $\theta d^2$.
Tensorization therefore gives
\[
 \operatorname{Var}Z\le\sum_j\mathbb E\Delta_j^2\le2\mu.
\]
For completeness, tensorization here can be seen directly by ordering
the activation indicators $\xi_1,\ldots,\xi_a$ and setting
$M_i=\mathbb E[Z\mid\xi_1,\ldots,\xi_i]$. The increments
$M_i-M_{i-1}$ are orthogonal in $L^2$, so
$\operatorname{Var}Z=\sum_i\mathbb E(M_i-M_{i-1})^2$.
With the earlier indicators fixed, let $d_i$ be the change in $Z$
when the $i$th indicator is changed from zero to one, as a function
of the later indicators. Independence gives
\[
 M_i-M_{i-1}=(\xi_i-\theta)\mathbb E_{\mathrm{later}}d_i.
\]
Its conditional second moment is
$\theta(1-\theta)(\mathbb E_{\mathrm{later}}d_i)^2$, which is at
most $\theta(1-\theta)\mathbb E_{\mathrm{later}}d_i^2$ by Jensen's
inequality. Averaging and summing proves the tensorization bound
used above. No independence between the covered vertices is required.

One can also see the pointwise loss estimate by assigning vertices
to labels. A covered vertex with two or more selected covering labels
disappears under none of the single-label deletions. A covered vertex
with exactly one selected covering label is assigned to that label,
and disappears only when that label is deleted. Hence the sets counted
by the positive losses $\Delta_j$ are disjoint subsets of $U(X)$.
The bound $\Delta_j\le2$ then gives
$\Delta_j^2\le2\Delta_j$ before taking any expectations.

Chebyshev's inequality and \eqref{eq:coverage-mean-lower-detailed} imply
\[
 \Pr(Z<\mu/2)\le\frac{4\operatorname{Var}Z}{\mu^2}
 \le\frac8\mu\le\frac{256}{\sqrt m}.
\]
Condition on $X$. The independent sample $Y$ misses $U(X)$ with
probability exactly $(1-\theta)^Z$. On $Z\ge\mu/2$ we have
$\theta Z\ge8$, so this probability is at most $\ee^{-8}$.
If $Y$ hits $U(X)$, some activated label $j$ and a vertex of its
target support both belong to $T$. Thus the failure probability for
this family is at most
\[
 \frac{256}{\sqrt m}+\ee^{-8}
 \le\frac1{256}+\ee^{-8}<\frac1{128}.
\]
This argument also covers a repeated target $\{v,v\}$, whose support
is the singleton $\{v\}$.

\medskip\noindent\emph{One sample for all families.}
Let $F$ be the number of failed families for the common set $T$.
There are at most $m$ families, so $\mathbb EF\le m/128$.
The failures may be dependent; only linearity of expectation is used.
Also
\[
 \mathbb E|T|\le\mathbb E|X|+\mathbb E|Y|
 =2\theta m=1024\sqrt m.
\]
Markov's inequality bounds the probabilities of $F>m/8$ and
$|T|>4096\sqrt m$ by $1/16$ and $1/4$, respectively. Their sum is
less than one, so a single realization satisfies both desired bounds.

Explicitly, if $E_1=\{F>m/8\}$ and
$E_2=\{|T|>4096\sqrt m\}$, the union bound gives
$\Pr(E_1\cup E_2)\le1/16+1/4<1$. A realization outside this
union works simultaneously for every successful family. We never
multiply probabilities belonging to different families, and we never
choose a separate realization of $T$ for each family. Finally,
enlarging $T$ preserves success, since success is an existence
condition on two memberships. This monotonicity permits its later
union with the easy-label sample.
\end{proof}

\begin{lemma}[Easy-label sample]\label{lem:easy}
With $|V|=m\ge2^{32}$ as in the preceding lemma, for at most $m$ subsets of $V$, each of size at least $m/8$, there is a common hitting set of size at most $2048\sqrt m$.
\end{lemma}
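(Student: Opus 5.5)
We prove the statement by a single random sample together with Markov's inequality, in the same spirit as the final step of Lemma~\ref{lem:sample}. Let the subsets be $S_1,\ldots,S_k\subseteq V$, with $k\le m$ and $|S_i|\ge m/8$ for every $i$. Choose $T\subseteq V$ by including each vertex independently with probability
\[
 \theta=\frac{c}{\sqrt m}
\]
for a suitable constant $c$, say $c=512$. The hypothesis $m\ge2^{32}$ makes $0<\theta<1$.

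For a fixed set $S_i$, the probability that $T$ misses it is exactly
\[
 (1-\theta)^{|S_i|}\le\ee^{-\theta m/8}=\ee^{-64\sqrt m}.
\]
Since $m\ge2^{32}$, we have $64\sqrt m\ge2^{22}$, so this probability is astronomically small. The expected number of missed sets is therefore at most $m\,\ee^{-64\sqrt m}$, which is far below $1/4$ for $m\ge2^{32}$; for example, $m\le\ee^{\sqrt m}$ already holds in this range. Hence the probability that some set is missed is less than $1/4$. This is a first-moment bound, so no independence between different sets is needed.

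Next, $\mathbb E|T|=\theta m=512\sqrt m$. Markov's inequality gives
\[
 \Pr\bigl(|T|>2048\sqrt m\bigr)\le\frac14.
\]
The union of the two bad events has probability less than $1/2<1$. Therefore some realization of $T$ hits every $S_i$ and satisfies $|T|\le2048\sqrt m$.

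There is no genuine obstacle. The only check is the numerical inequality $m\,\ee^{-64\sqrt m}<1/4$ when $m\ge2^{32}$, which is immediate. Alternatively, a deterministic greedy argument would give a hitting set of size $O(\sqrt m\log m)$, but that has the wrong order, so the random construction is the one to use.
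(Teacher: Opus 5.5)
Your proposal is correct and matches the paper's proof: sample independently at rate $\Theta(1/\sqrt m)$, bound each miss probability by $(1-\theta)^{|E|}\le\exp(-\theta|E|)$, take a union bound over the at most $m$ sets, and apply Markov's inequality to $|T|$. The only difference is the constant: you use $\theta=512/\sqrt m$ where the paper uses $1024/\sqrt m$, so your size-failure probability is $1/4$ rather than the paper's $1/2$.
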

\begin{proof}
Select each vertex independently with probability
$\theta=1024/\sqrt m$. For a fixed set $E$ of size at least $m/8$,
\[
 \Pr(T\cap E=\varnothing)=(1-\theta)^{|E|}
 \le\exp(-\theta|E|)\le\exp(-128\sqrt m).
\]
The union bound over at most $m$ sets shows that the probability of
missing any prescribed set is at most $m\exp(-128\sqrt m)<1/4$.
For example, writing $s=\sqrt m\ge2^{16}$ and using
$\log m=2\log s\le2s$, this expression is at most $\ee^{-126s}$.

The expected size of the sample is $\theta m=1024\sqrt m$.
By Markov's inequality, its size exceeds $2048\sqrt m$ with
probability at most $1/2$. The total probability of either failure
is less than one. Therefore some sample has the required size and
hits every prescribed set simultaneously.
\end{proof}

The distinction between the two sampling lemmas will matter later.
An easy family needs only its partner label to be promoted. A genuine
family needs both a partner label and at least one target label to be
promoted. The first lemma therefore allows a small number of missed
families, whereas the easy-label lemma hits every prescribed set.
Both costs are $O(\sqrt m)$, and their union can be promoted in a
single compression round.

\begin{lemma}[Short forest]\label{lem:short-forest}
In a graph on $N$ active vertices and an optional root $0$, suppose at most $f$ active vertices have neither an active neighbour nor a root edge. There is a forest whose unrooted components have diameter at most three, whose rooted vertices have root distance at most four, and whose number of unrooted components is at most $(N+f)/2$.
\end{lemma}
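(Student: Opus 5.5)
We build the forest greedily in two stages, first attaching everything that can reach the root and then covering the rest of the active graph by stars. Let $G$ be the graph and $R$ the set of active vertices adjacent to the root $0$.

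\emph{Stage 1 (rooted part).} Attach every vertex of $R$ to $0$ by its root edge; these vertices have root distance one. Next, every active vertex not in $R$ that has a neighbour in $R$ is attached to one such neighbour, at root distance two. All these vertices lie in a single tree through $0$, and we call them rooted. The remaining active vertices are handled in the unrooted stage. Deeper rooted vertices (distance three or four) will arise only from the repair step in Stage 3.

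\emph{Stage 2 (unrooted part).} Let $W$ be the set of active vertices not yet rooted, and let $W_0\subseteq W$ consist of those with no active neighbour at all. Such vertices have no root edge either, since they are not in $R$, so $|W_0|\le f$ by hypothesis. Each vertex of $W_0$ becomes its own singleton component.

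Every vertex of $W_1=W\setminus W_0$ has an active neighbour. If that neighbour were rooted, the vertex would lie within distance two of $0$ through it, so it would have been rooted in Stage 1 or can be rooted now at distance at most three. Such vertices are moved into the rooted part, with root distance at most three.

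For the vertices that remain, every active neighbour lies in $W$. The key claim is that, after this cleanup, every vertex of $W_1$ has a neighbour inside $W_1$. A neighbour in $W_0$ is impossible, because vertices of $W_0$ have no active neighbours.

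\emph{Stage 3 (star cover).} In the subgraph induced on $W_1$, which has no isolated vertices, choose a spanning star forest. This is the standard fact: take a spanning forest of the induced subgraph, and in each tree with at least two vertices repeatedly cut edges to obtain stars with at least two vertices each. Concretely, take a maximal matching, then attach each unmatched vertex to a matched neighbour. One must ensure the resulting component has diameter at most three. A vertex attached to an endpoint of a matching edge $uv$ gives a double star, which has diameter at most three. Every component then has at least two vertices.

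The count is therefore
\[
 \#\{\text{unrooted components}\}\le|W_0|+\frac{|W_1|}2\le f+\frac{N-f}2\cdot\mathbf{1}\le\frac{N+f}2,
\]
using $|W_0|\le f$ and $|W_0|+|W_1|\le N$. More precisely, $|W_0|+|W_1|/2\le (|W_0|+N)/2\le(N+f)/2$.

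\emph{Main obstacle.} The delicate point is the interaction between the matching step and the rooted tree. An unmatched vertex whose only matched neighbours lie in the rooted part must be hung onto the rooted tree, and this is what can push root distance to four: root, then $R$, then distance-two vertex, then matched partner, then pendant. The approach is to do the maximal matching on all unrooted active vertices with neighbours. Each unmatched one is then attached to a matched neighbour, or, if its neighbours are all rooted vertices of depth at most three, to one of those. Depth never exceeds four, and no unrooted double star is enlarged beyond diameter three. I expect carefully bookkeeping these depths, so that rooted attachments only hang off vertices of depth at most three, to be the main thing to verify.
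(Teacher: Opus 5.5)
\textbf{There is a gap: your Stage~2 ``key claim'' is false, and Stage~3 relies on it.} The one-shot cleanup moves to depth three every vertex of $W_1$ that has a depth-two neighbour. A vertex that remains can have all of its active neighbours among the vertices just moved, and then it has no neighbour left in $W_1$. Your justification only rules out neighbours in $W_0$.

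For a concrete case, take active vertices $r,x,y,z_1,\ldots,z_k$ with edges $0r$, $rx$, $xy$ and $yz_i$. One may take $f=0$, and $N=k+3$. Stage~1 roots $r$ and $x$, and the cleanup moves $y$. Every $z_i$ is then isolated in the graph induced on the remaining $W_1$, although $z_i\notin W_0$. These vertices are covered neither by your star forest nor by the bound $|W_0|\le f$. Left as singletons, they give $k>(N+f)/2$ unrooted components once $k\ge4$, so your displayed count does not apply. Reading the cleanup as iterated fixes this example, but then root depths become unbounded along a long path.

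Your final paragraph points at the right repair, but you leave it unverified, and its description is inconsistent. It speaks of matched neighbours lying in the rooted part, and of a chain ending in ``matched partner, then pendant''. If a double star were rooted through a depth-two vertex, the pendants of its far matched endpoint would sit at depth five.

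A correct version of your route is as follows.
\begin{itemize}
\item Omit the cleanup.
\item Let $U$ be the set of active vertices not rooted in Stage~1; these have neither a root edge nor a neighbour in $R$.
\item Take a maximal matching of the graph induced on $U$.
\item Attach each unmatched $w\in U$ to a neighbour in $U$ if it has one; by maximality that neighbour is matched.
\item Otherwise attach $w$ to a rooted neighbour, which must have depth two, so $w$ gets depth three.
\item Otherwise leave $w$ as a singleton.
\end{itemize}
Such a singleton has no active neighbour and no root edge, so there are at most $f$ of them. Every other unrooted component is a double star inside $U$ with at least two vertices and diameter at most three. Hence $q\le(N+f)/2$, with root distance at most three.

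The paper avoids this interaction altogether by reversing the order. It first builds the maximal matching and double stars on all active vertices. Only afterwards does it add a single root edge to each double star containing a root-adjacent vertex, and to each singleton that has a root edge. This gives root distance at most $1+3=4$ with no cascade to control.
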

\begin{proof}
First ignore the optional root and take an inclusion-maximal matching
among the active vertices. No two unmatched vertices can be adjacent,
since their edge could otherwise be added to the matching. Hence every
unmatched vertex that has an active neighbour has a matched neighbour.
Attach it to one chosen matched neighbour by a single edge.

The resulting components containing matching edges are double stars:
each consists of one matched edge, with leaves attached to either
endpoint. They are trees, and a path between any two vertices uses at
most a leaf edge, the matched edge, and another leaf edge. Their
diameter is at most three. The remaining active vertices have no
active neighbour and are singleton components at this stage.

For each double star containing a vertex adjacent to the root, choose
one such vertex and one root edge, and add only that root edge. Also
join each active singleton with a root edge directly to the root.
Adding a single root edge to each of several disjoint trees cannot
create a cycle. Any vertex in a rooted double star is at distance at
most three from its chosen attachment vertex and hence at most four
from the root. Rooted singletons have distance one.

Let $q$ be the number of components left unrooted, and let $s$ of them
be singletons. Each such singleton has neither an active neighbour nor
a root edge, so $s\le f$. Each other unrooted component contains at
least two active vertices. Counting only the vertices in unrooted
components gives
\[
 N\ge s+2(q-s)=2q-s\ge2q-f.
\]
Thus $q\le(N+f)/2$. The construction also applies when the root is
absent, when the matching is empty, or when there are no active vertices.
\end{proof}

The diameter condition is as important as the component count. Along
a path, the core corrections in successive unit relations add. A
general spanning tree could have a path of length comparable with
$N$, introducing an $N$-dependent loss at every round. The double-star
construction bounds that accumulation by four edge corrections,
including the possible final edge to the root.

\section{Primitive compression with actual anchors}\label{sec:compression}
An anchor is an actual point whose image is one of the current basis
vectors. Keeping such anchors will preserve the degree and actual-label
maps throughout the argument.

\begin{theorem}\label{thm:compression}
Let $p\ge64$ be prime and let $A\subseteq\F_p$ be inclusion-minimal NUS,
with $n=|A|$. There is a subgroup $P\subseteq\Lambda$, primitive in
$\Z^A$, such that
\[
 \Z^A/P\simeq\Z^K,
 \qquad K\le2^{33}\left(\sqrt n+\frac n{\log p}\right).
\]
There is a basis consisting of images of distinct actual points in which
every $a\in A$ has coordinates $z_a$ satisfying
\[
 \sum_i(z_a)_i=1,\qquad \norm{z_a}_1\le n^{18}.
\]
The subgroup $P$ is obtained by successively adding genuine collision
rows or differences of two such rows, with every elimination pivot
equal to $1$ or $-1$.
\end{theorem}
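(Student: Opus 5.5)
We will build $P$ by an iterative compression in rounds. Throughout we maintain a free quotient $Q=\Z^A/P\simeq\Z^K$ with a basis formed by images of distinct anchors. Each actual point $a$ has coordinates $z_a$ with coordinate sum $1$; this sum is preserved because every added row lies in $\Z^A_0$. The basis splits into a bounded \emph{core} and a set of $m$ \emph{active blocks}. Each active coordinate represents a disjoint subset of $A$ of size less than $\ell_*=\lfloor(\log_2p)/3\rfloor$, namely the points whose image is that basis vector plus a core offset of norm at most $H$.

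We initialise with $P=0$. Every point is then its own block, so $m=n$, $H=0$ and the core is empty. A block that grows to size $\ge\ell_*$ is declared heavy and moved to the core. Since blocks are disjoint, at most $n/\ell_*=O(n/\log p)$ core coordinates ever arise, and this is the $n/\log p$ term in $K$. We stop once $m<2^{32}$ or $m\le C\sqrt n$; otherwise we perform one round.

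\emph{One round.} For two active blocks $I,J$, disjointness together with $2^{|I|+|J|-2}<p$ lets Lemma~\ref{lem:rectangle} supply a pair $(a,b)\in I\times J$ whose NUS repair $a+b=c+d$ leaves $I\times J$. Projected to $Q$, this repair is a row $e_I+e_J-e_{c}-e_{d}+(\text{core})$, where $e_c,e_d$ denote the active block coordinates containing $c,d$ (or core, if they lie in heavy blocks). For each block $I$ we consider the family of such escaping records indexed by partner $j=J$ with target type $\tau=\{c,d\}$.

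If within one family two records share a target type, subtracting them gives a unit relation $e_J-e_{J'}+(\text{core})$; these are the "easy" relations. If a family has many distinct types, we apply Lemma~\ref{lem:sample} to obtain a common set $T$ of size $O(\sqrt m)$. After promoting $T$ to the core, all but $m/8$ blocks acquire a relation touching only one non-promoted active coordinate (namely $I$) besides core terms, i.e.\ a root edge. Easy families are handled by Lemma~\ref{lem:easy}, whose sample is also added to the core. Relations involving exactly two active coordinates with coefficients $\pm1$ become graph edges.

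Lemma~\ref{lem:short-forest} then gives a forest with unrooted component count at most $(m+m/8)/2$. We eliminate one pivot per forest edge, each with pivot $\pm1$, so $P$ stays primitive without saturation. The number of active coordinates therefore shrinks to at most $\tfrac{9}{16}m+O(\sqrt m)$. Because paths have length at most four, each new offset is a sum of at most four corrections, each bounded by $O(H+1)$ times the record size. This gives $H'\le CH+C$ for an absolute $C$.

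\emph{Totals.} After $O(\log n)$ rounds we have $m=O(\sqrt n)$. The core collects $\sum O(\sqrt{m_r})=O(\sqrt n)$ promoted coordinates (a geometric sum) plus $O(n/\log p)$ heavy blocks, which gives the bound on $K$. The height is $C^{O(\log n)}=n^{O(1)}$, and the constants are tuned to give exponent $18$.

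\emph{Main obstacle.} The delicate step is bookkeeping the height. A promoted coordinate used as a pivot target must remain an anchor image, and merged blocks must stay disjoint actual-point sets with offsets of norm $\le H$. I would first verify that substituting along a path of length $\le4$ multiplies heights by a fixed constant independent of $m$. This is exactly what the diameter bound of Lemma~\ref{lem:short-forest} is designed to guarantee. I would also confirm that differences of two genuine rows suffice as the only non-genuine rows.
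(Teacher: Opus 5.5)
Your overall architecture matches the paper's: escaping records from Lemma~\ref{lem:rectangle}, star edges from repeated target types, one shared $O(\sqrt m)$ promotion per round, a short forest eliminated with pivots $\pm1$, a geometric sum of sample costs, at most $n/\ell_*$ heavy promotions, and a fixed height multiplier over $O(\log n)$ rounds.

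\textbf{The gap: covering blocks with few target types.} As written, your round does not show that all but a constant fraction of blocks receive an incident unit relation. Consider a block $I$ whose family has fewer than $m/8$ distinct target types. Neither sampling lemma covers it:
\begin{itemize}
\item Lemma~\ref{lem:sample} needs at least $m/8$ configurations per family.
\item The star edges $x_J-x_{J'}$ from $I$'s own family are incident to its partners, not to $I$.
\item Promoting a partner whose target is a genuine two-block type leaves $x_I-x_K-x_L$, which is not a unit relation.
\end{itemize}
So ``all but $m/8$ blocks acquire a relation'' is unjustified when such blocks are numerous.

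The missing idea is the paper's dichotomy. Suppose even one vertex $i$ (with no record whose target contains its partner) has fewer than $m/8$ partners whose targets contain at most one active block, and fewer than $m/8$ genuine types. Then the records of $i$'s family and the stars built from them already give an incident unit relation to every block except $i$, those partners, and the partners in singleton type classes. That is at most $m/4+1$ blocks, so the forest has at most $5m/8+1/2\le2m/3$ unrooted components and no sampling is needed that round. Only when every such vertex has one of the two large sets are the two samples taken; they then serve all families at once.

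\textbf{The families must be cleaned before sampling.} Write $\tau_{IJ}$ for the multiset of active blocks containing the targets of the record for $\{I,J\}$.
\begin{itemize}
\item If $J\in\tau_{IJ}$, cancellation already gives a unit relation at $I$.
\item If $I\in\tau_{IJ}$, it gives one at $J$ but in general not at $I$. With $\tau_{IJ}=\{I,K\}$, a sample hitting $J$ and $K$ leaves a core-only row.
\item Lemma~\ref{lem:sample} requires $j\notin\supp\tau$ and a genuine two-element type. Targets with zero or one active block must therefore be split off. These are exactly the partners for which Lemma~\ref{lem:easy}, promoting only the partner, yields $x_I$ or $x_I-x_K$.
\end{itemize}
Your use of ``easy'' for repeated types conflates these two mechanisms.

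A successful hit promotes the partner and only one target block. The outcome is therefore in general an edge $x_I-x_\ell$ rather than a root edge; this is harmless for the forest.

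With these repairs each round gives $m'\le2m/3$ and $H'\le60H$. The inequality $\log60/\log(3/2)<11$ then yields the exponent $18$, which your sketch leaves to ``tuning''.
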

\begin{proof}
Put $\ell_* =\lfloor(\log_2p)/3\rfloor\ge2$.
The purpose of the construction is to identify many coordinates while
keeping a free integer quotient. We retain actual points as basis
anchors, so that the quotient also remembers the original labels in
$\F_p$. The quantitative issue is to do this with few surviving
coordinates and without making the coordinates of the other points
too large.

The proof has three bookkeeping tasks. First, every added relation
must remain in the original lattice $\Lambda$. Second, the quotient
must have an explicit integer basis, rather than only a rational
basis. Third, the number of surviving coordinates and the size of
all point coordinates must be estimated simultaneously. The first
four steps construct and preserve the state; the final step sums
the costs over all rounds.

\medskip\noindent\emph{Step 1: the state and its invariants.}
At any stage we have a subgroup $P\subseteq\Lambda$ and an explicit
free basis of $\Z^A/P$. The basis is divided into permanent core
coordinates and active coordinates $x_B$. The latter are indexed by
pairwise disjoint physical blocks $B\subseteq A$, each of size less
than $\ell_*$. For $a$ in an active block, and for an inactive point,
respectively, the coordinate descriptions are
\[
 \overline e_a=x_B+g_a,\qquad \overline e_a=g_a.
\]
Every $g_a$ uses only permanent core coordinates, and one common bound
$H\ge1$ satisfies $\norm{g_a}_1\le H$. Each active block has an actual
anchor $a_B\in B$ with $g_{a_B}=0$. Every permanent core is likewise
the image of an actual point. Initially $P=0$, the blocks are the
singletons of $A$, there are no cores, and $H=1$ is valid because all
offsets are zero.

More explicitly, if $\mathcal C$ is the current set of cores and
$\mathcal B$ is the current collection of active blocks, the state
specifies an isomorphism
\[
 \Z^A/P\simeq\Z^{\mathcal C}\oplus\Z^{\mathcal B}.
\]
An active point has exactly one active coordinate, with coefficient
one. All other information about that point is in its core offset.
Inactive points have no active coordinate. Thus physical blocks
describe disjoint subsets of the original set, while basis vectors
describe the current free quotient; these are related but different
objects. In particular, a merger changes the partition into blocks,
whereas promotion changes which part of the basis is called active.

To \emph{promote} a block means to reclassify its basis coordinate as a
permanent core and make its points inactive. Promotion adds no relation
and changes no quotient: it only changes the division of its basis.
A point formerly written $x_B+g_a$ now has a core-only expression of
norm at most $H+1$. The old anchor becomes the new core anchor.
Whenever a merged block has size at least $\ell_*$, we promote it and
call it heavy. An inactive point never becomes active again. This
last rule will allow us to charge heavy promotions to disjoint subsets
of the original set.

Two homomorphisms accompany every state. The degree map to $\Z$ and
the label map to $\F_p$ are induced by
\[
 e_a\longmapsto1,\qquad e_a\longmapsto a,
\]
respectively. Both annihilate $P$ because $P\subseteq\Lambda$.
For any chosen anchor basis $(a_i)$, they consequently satisfy
\[
 \sum_i(z_a)_i=1,\qquad
 \sum_i(z_a)_i a_i=a\quad\hbox{in }\F_p.
\]
The second identity explains why actual anchors will later supply
a modular kernel vector. It does not require the integer coordinates
$(z_a)_i$ to be positive or to be small modulo $p$.

The degree map also describes the offsets. For an active point,
$\deg(g_a)=\deg(\overline e_a)-\deg(x_B)=0$; for an inactive
point, $\deg(g_a)=1$. Here the degree of a core expression is the
sum of its coefficients, because each core is an actual anchor.
Consequently a unit row $x_i-x_j+h$ has $\deg(h)=0$, whereas
a root row $x_i+h$ has $\deg(h)=-1$. Solving the latter as
$x_i=-h$ gives a degree-one core expression, as it must. In
particular, the formal root has value zero in the active-coordinate
notation; it is not an actual anchor of degree one.

\medskip\noindent\emph{Step 2: escaping collisions and unit relations.}
Suppose there are $m\ge2^{32}$ active blocks. If every cross pair of
two different blocks $B_i,B_j$ had a different representation within
$B_i\times B_j$, Lemma~\ref{lem:rectangle} would imply
\[
 p\le2^{|B_i|+|B_j|-2}<p.
\]
Indeed $|B_i|,|B_j|<\ell_*$ and
$2\ell_*\le(2/3)\log_2p$. Thus we may choose a cross pair with no
internal repair. The NUS property still supplies a repair in $A$,
which must escape that rectangle. Its actual collision row has image
\begin{equation}\label{eq:escaping}
 r_{ij}=x_i+x_j-x(\nu_{ij})+g_{ij},
 \qquad \norm{g_{ij}}_1\le4H.
\end{equation}
Here $\nu_{ij}$ is the multiset of active blocks containing the two
target points, with inactive target points omitted; multiplicities
are retained. Thus $|\nu_{ij}|\le2$ and
$x(\nu)=\sum_{k\in\nu}x_k$. Moreover $\nu_{ij}\ne\{i,j\}$:
otherwise the target would be an internal repair of the chosen pair.
Make and retain one such record for each unordered pair of blocks.
The bound $4H$ counts the offsets of the two source and two target
points, including repeated points if present.

A \emph{unit relation} is a row whose active part is $x_i-x_j$ or
$x_i$, up to sign. We represent it by an edge $ij$, or by an edge
from $i$ to a formal root $0$, respectively. The root represents
the absence of an active coordinate; it is not an additional basis
coordinate. Every graph edge keeps the actual integer row that it
represents.

If $\nu_{ij}$ contains $j$, cancellation in \eqref{eq:escaping}
gives a unit relation incident to $i$. Its other target, if present,
cannot be $i$, since $\nu_{ij}\ne\{i,j\}$. This observation also
covers the repeated target $\{j,j\}$. Call $i$ \emph{direct} if such
a record exists. A target containing $i$ similarly gives a unit
relation incident to its partner $j$.

For example, target types $\{j,k\}$, $\{j,j\}$ and $\{j\}$
give active parts $x_i-x_k$, $x_i-x_j$ and $x_i$, respectively.
An omitted target occurrence is an inactive point, whose entire
image has already been included in the core term. These examples
also show why multiplicities in target types cannot be discarded:
the type $\{j,j\}$ contributes $2x_j$ before cancellation.

Now fix a nondirect $i$. No target of its records contains its
partner $j$. Set aside partners whose targets contain $i$, since
they already have incident unit relations. All remaining targets
avoid both $i$ and $j$. Among these, a partner is \emph{easy} if its
target has size zero or one; a target of size two is called a
\emph{genuine type}. Group the latter partners by their exact target
multisets. In a class choose a representative $j_0$. For every other
partner $j$, subtract the two retained records to obtain
\[
 r_{ij}-r_{ij_0}
 =x_j-x_{j_0}+g_{ij}-g_{ij_0},
 \qquad \norm{g_{ij}-g_{ij_0}}_1\le8H.
\]
These are star edges on the partners of the class. In particular,
every vertex of a class of size at least two has an incident edge.
Build the unit graph using these stars and the unit relations already
obtained directly. All its source rows belong to $\Lambda$; a star
edge uses a difference of two actual collision rows.

It is essential here that the two records are compared in the same
current basis. Their identical target multisets then cancel as
integer vectors, even when the actual target points within a block
are different: the differences of those points are core offsets.
The resulting edge is recorded together with its preimage
$r_{ij}-r_{ij_0}$ in the original lattice, not merely with a formal
equality among active labels.

\medskip\noindent\emph{Step 3: obtaining a fixed proportional reduction.}
There are two cases, according to whether any nondirect vertex has
both few easy partners and few genuine types.

First suppose that some nondirect $i$ has fewer than $m/8$ easy
partners and fewer than $m/8$ genuine types. For a partner $j$, a
target containing $i$ already supplies an edge at $j$; a target
containing $j$ is impossible by nondirectness. A partner in a
genuine class of size at least two has a star edge. Consequently
the only possible isolated vertices are $i$, its easy partners,
and the partners in singleton genuine classes. Their number is at
most $m/4+1$. By Lemma~\ref{lem:short-forest}, a selected forest has
at most
\[
 \frac{m+(m/4+1)}2=\frac{5m}8+\frac12\le\frac{2m}3
\]
unrooted components; the last inequality holds for $m\ge12$.
This case requires no sample promotions.

Otherwise every nondirect vertex has at least $m/8$ easy partners
or at least $m/8$ genuine types. Assign each such vertex to one
alternative that it satisfies. Apply Lemma~\ref{lem:easy} to all
assigned easy-partner sets. For a vertex assigned to genuine types,
choose one partner per type and use the configurations
$(j,\nu_{ij})$. In each family the partners are distinct, the types
are distinct, and $j\notin\supp\nu_{ij}$, so Lemma~\ref{lem:sample}
applies. There are at most $m$ families of either kind. Promote the
union of the two resulting samples. This uses at most
\[
 (2048+4096)\sqrt m=6144\sqrt m
\]
new cores. The two existence lemmas can be applied separately;
the union retains every hitting property.

Consider a vertex $i$ that remains active. If its easy family is
hit, an appropriate partner is now a core, and the corresponding
row has active part $x_i$ or $x_i-x_k$. If its genuine-type family
is hit, the partner and at least one target block have been
promoted, with the same conclusion. When the target is $\{k,k\}$,
promoting $k$ removes both active occurrences. Existing unit
relations with at least one remaining active endpoint also remain
unit relations, becoming root edges when necessary. Thus only
vertices assigned to missed genuine-type families can be isolated,
and there are at most $m/8$ of these. If $N\le m$ active vertices
remain, the short-forest lemma leaves at most
\[
 \frac{N+m/8}2\le\frac{9m}{16}\le\frac{2m}3
\]
unrooted components. Its diameter and root-distance conclusions
hold in both cases.

For completeness, the active-coordinate calculation after promotion
can be listed explicitly. In the following table $i$ remains active,
the sampled partner $j$ has become a core, and all terms from promoted
coordinates have been absorbed into the core correction:
\[
 \begin{array}{c|c|c}
 \text{original target type}&\text{additional promotion}
     &\text{remaining active part}\\ \hline
 \varnothing&\text{none}&x_i\\
 \{k\}&k\text{ remains active}&x_i-x_k\\
 \{k\}&k\text{ promoted}&x_i\\
 \{k,\ell\},\ k\ne\ell&k\text{ promoted, }\ell\text{ active}
     &x_i-x_\ell\\
 \{k,\ell\}&k,\ell\text{ promoted}&x_i\\
 \{k,k\}&k\text{ promoted}&x_i.
 \end{array}
\]
The families being sampled have targets avoiding both $i$ and $j$.
Thus a surviving target in this table cannot be $i$, and every
displayed active part is a nonzero unit relation incident to $i$.
If $i$ itself is promoted, it no longer belongs to the active graph
and needs no incident edge. This accounts for both the successful
families and the reduction from $m$ vertices to $N$ vertices.

These alternatives exhaust all states. In the first, repeated
targets already give enough edges to merge many active coordinates.
In the second, there is sufficient variety for the common sample
to reveal enough edges. The sample is shared across all families,
so its $6144\sqrt m$ cost is paid once for the round. Paying that
cost separately for each active vertex would not give the required
dimension estimate.

It is helpful to follow one successful genuine configuration through
this step. Suppose its active part is
$x_i+x_j-x_k-x_\ell$, with four distinct indices. Before promotion
this row alone supplies no unit pivot of the required graph form.
When the common sample contains $j$ and $k$, these two coordinates
become cores, say $c_j,c_k$, and the same row is now
\[
 x_i-x_\ell+(g_{ij}+c_j-c_k).
\]
The expression in parentheses uses only core coordinates, so this
is an edge from $i$ to $\ell$. If $\ell$ was sampled as well, the
row becomes a root edge at $i$. Thus success needs one sampled
partner and one sampled target, rather than all four indices.
The common sample is chosen before selecting the forest; the forest
uses only relations that this one promotion has actually made
available. No further sample depending on an individual edge is
introduced during elimination.

\medskip\noindent\emph{Step 4: why the elimination is valid over $\Z$.}
Add the source rows of the selected forest to the current subgroup
of relations. In an unrooted component choose one coordinate $x_r$
to retain and orient its edges towards $r$. In a rooted component
orient edges towards the formal root and retain no active coordinate.
After changing an edge sign if necessary, its relation has the form
\[
 x_v-x_w+h_{vw}=0,\qquad\hbox{or}\qquad x_v+h_{v0}=0,
\]
where $h$ uses only cores. Starting with leaves, solve for $x_v$
with coefficient $1$. No division or multiplication of a relation
by a nonunit is needed.

The number of relations added in this step can be counted exactly.
Let $N$ be the active count after sample promotion and let $q$ be
the number of unrooted components of the selected forest, including
isolated active vertices. An unrooted tree with $v$ active vertices
has $v-1$ selected edges. A tree attached to the formal root with
$v$ active vertices has $v$ selected edges, counting its root edge.
Summing over the forest gives $N-q$ rows. The unit pivots below
show that these rows are independent in the current quotient.
Thus they remove precisely $N-q$ coordinates and leave precisely
$q$ active coordinates before heavy promotion. This count also
explains why a rooted component incurs no new anchor cost.

For a concrete path $v,w,r$ in an unrooted component, suppose the
two edge rows are $x_v-x_w+h_1$ and $x_w-x_r+h_2$. Solving gives
\[
 x_w=x_r-h_2,\qquad x_v=x_r-h_2-h_1.
\]
A point originally written $x_v+g_a$ therefore becomes
$x_r+(g_a-h_1-h_2)$. The coefficient of the retained active
coordinate is still one, and the old anchor at $r$ still has
offset zero. In a rooted path the same substitution ends at a
core-only expression. The general forest calculation is this
substitution along each unique path.

For clarity, this also identifies the entire relation kernel, not
just its rank over $\Q$. Substitute the solved expressions along
the unique paths towards $r$ or $0$. They express every eliminated
coordinate as $x_r$ plus a core combination, or as a core combination
in the rooted case. They impose no equation on $x_r$ or on a core.
Conversely, subtracting integer multiples of the selected edge rows
reduces any vector to this unique expression in the retained basis.
A vector reduces to zero exactly when it belongs to the integer
span of those edge rows. The new quotient is therefore free over
$\Z$ on the old cores and retained active coordinates.

Taking the preimage in $\Z^A$ gives precisely the old $P$ plus the
chosen source rows. Since those rows lie in $\Lambda$, the new
$P$ is still contained in $\Lambda$. Since its quotient is free,
$P$ is primitive. This proves primitivity directly, without replacing
the constructed subgroup by its saturation.

An explicit quotient map makes the kernel assertion transparent.
Let $E$ be the current free group on cores and active coordinates,
and let $F$ be the free group on the cores and one chosen anchor for
each unrooted component. Keep each core fixed. If $v$ lies in an
unrooted component with chosen anchor $r$, define
\[
 \pi(x_v)=x_r-\sum_{e\in[v,r]}h_e;
\]
if $v$ lies in a rooted component, define
$\pi(x_v)=-\sum_{e\in[v,0]}h_e$. Orient the edge rows along
these paths, so an edge from $v$ to its parent $w$ is
$x_v-x_w+h_e$, with $x_0=0$. The sums telescope and show that
every selected row belongs to $\ker\pi$. Conversely, using these
rows to replace eliminated coordinates reduces any $z\in E$ to
the expression $\pi(z)$ in the retained coordinates. Thus if
$\pi(z)=0$, the original $z$ is an integer combination of the
selected rows. We obtain
\[
 \ker\pi=\operatorname{span}_{\Z}\{\text{selected edge rows}\},
 \qquad E/\ker\pi\simeq F.
\]
The map is onto because it fixes every retained generator. This
description also shows directly that the old cores remain linearly
independent, even when several rooted trees attach to the same
formal root: the root contributes no extra generator or equation.

Equivalently, order eliminated coordinates by a leaf-removal order
and use the corresponding edge rows in that order. Their submatrix
on those coordinates is triangular with diagonal entries $\pm1$.
Its determinant is therefore $\pm1$. This is the integer linear
algebra behind the substitution: the selected rows remove exactly
the intended coordinates, with no finite torsion left over. By
contrast, a pivot of absolute value greater than one would not
justify this conclusion over $\Z$.

The same calculation has a useful matrix form. Order the coordinates
with the $N-q$ eliminated coordinates first and all retained
coordinates afterwards. The matrix of the selected rows is
$[T\ B]$, where $T$ is triangular with diagonal entries $\pm1$.
It follows that $T^{-1}$ is an integer matrix. Integer row operations
therefore replace these generators by $[I\ W]$, with $W=T^{-1}B$,
without changing their integer span. If $(u,v)$ is any row vector in
these coordinates, its image in the retained basis is
\[
 (u,v)\longmapsto v-uW,
 \qquad (u,v)-u[I\ W]=(0,v-uW).
\]
This identity proves both directions of the kernel claim: the image
is zero if and only if $(u,v)$ is an integer combination of the
selected rows. It also makes surjectivity explicit, since $(0,v)$
maps to $v$. The columns of $W$ that correspond to cores contain
signed sums of the edge corrections along a path; the columns for
retained active coordinates record the one surviving anchor of an
unrooted component. Thus the matrix calculation and the path
calculation describe the same quotient. For the height estimate we
use the latter description, which retains the path-length information
that a general matrix inverse estimate would discard.

As a numerical illustration of this integer substitution, suppose
two selected edge rows, on a path with retained endpoint $x_3$, are
\[
 x_1-x_2+2c_1-2c_2,
 \qquad x_2-x_3-c_1+c_2,
\]
where $c_1,c_2$ are permanent cores. This illustrates the quotient
calculation once such rows have been selected; it is not an
additional assumption on which collisions occur in $A$. Both rows
have degree zero. Substitution gives
\[
 x_2=x_3+c_1-c_2,
 \qquad x_1=x_3-c_1+c_2.
\]
In the column order $(x_1,x_2\mid x_3,c_1,c_2)$, their matrix and
its integer row reduction are
\[
 \begin{pmatrix}1&-1&0&2&-2\\0&1&-1&-1&1\end{pmatrix}
 \ \longmapsto\ 
 \begin{pmatrix}1&0&-1&1&-1\\0&1&-1&-1&1\end{pmatrix}.
\]
The first row has merely been replaced by the sum of the two rows.
For example, an old point expression $x_1+3c_1-3c_2$ becomes
$x_3+2c_1-2c_2$. Its coefficients still sum to one, although some
are negative. This is why the invariant concerns the sum of the
coefficients and their absolute values, without requiring a convex
combination of anchors. If the path also has the root row
$x_3-c_2$, the same point becomes $2c_1-c_2$, again of degree one,
and no active coordinate remains. In each case the displayed
substitutions leave the retained generators unrestricted.

For an unrooted component merge its physical blocks and use the
old actual anchor of $x_r$ as the merged anchor. Its offset is zero
because $x_r$ itself was retained. All blocks in a rooted component
become inactive. Promote every heavy merged block, keeping its
actual anchor as the new core anchor. The other merged blocks are
still pairwise disjoint and have size less than $\ell_*$. There
are at most $2m/3$ of them. Thus all structural invariants persist.

We next check the offsets. Before sample promotion every selected
unit row has core part of norm at most $8H$. A newly usable actual
collision row has norm at most $4H$ before that promotion, and at
most four active occurrences can be reclassified as cores. A
previous unit row has at most two such occurrences. Hence the
uniform bound
\[
 8H+4\le12H
\]
is valid for every edge after the sample promotions. The expression
for an eliminated coordinate follows a path of length at most three
to its retained anchor, or at most four to the root. Adding the
original point offset gives norm at most
\[
 H+4\cdot12H=49H.
\]
Heavy promotion adds at most one core unit vector, so gives at most
$49H+1\le50H$. Points in sample-promoted blocks have norm at most
$H+1\le2H$, and older inactive points have unchanged expressions.
Thus $60H$ is a convenient common bound for the next state.

There is no hidden factor equal to a component's number of vertices
in this estimate. To compute the new offset of a particular point,
only the path from its old block to the retained anchor or root is
used. Different points may use different paths, but the bound is a
maximum over point norms, not a sum over all points. This is exactly
where the short-forest lemma turns a local elimination into a
uniform height bound.

We can summarize a round by a recurrence. Let $c,m,H$ denote the
numbers of cores and active blocks and the height bound before the
round. Let $s$ be the number of sampled blocks promoted, and let
$h$ be the number of heavy components promoted after elimination.
With $q$ as above, the next state has
\[
 c'=c+s+h,\qquad m'=q-h\le\frac{2m}{3},\qquad H'\le60H,
 \qquad 0\le s\le6144\sqrt m.
\]
Promotion changes $c$ and $m$ but preserves their sum. Elimination
changes their sum by $-(N-q)$, where $N=m-s$. Indeed
$c'+m'=c+s+q=(c+m)-(N-q)$. These identities distinguish the
cost of retaining an anchor from the rank removed by adding a
relation. The heavy cost $h$ will be bounded across all rounds by
charging original points, rather than by charging the active count
separately in every round.

\medskip\noindent\emph{Step 5: counting cores and bounding the height.}
Repeat while the active count is at least $2^{32}$, then promote
every remaining block. The active count decreases by a factor of
at least $2/3$ at each nonterminal round, so the procedure terminates.
If $m_j$ is its value before round $j$, starting with $j=0$, then
$m_j\le n(2/3)^j$. Consequently sample promotions contribute fewer
than
\[
 6144\sum_{j\ge0}\sqrt{n(2/3)^j}
 =\frac{6144\sqrt n}{1-\sqrt{2/3}}<36864\sqrt n
\]
cores. The final numerical inequality follows from $\sqrt{2/3}<5/6$.

Here a sample promotion costs one basis coordinate per sampled
block, regardless of how many original points that block contains.
Later rounds are sampled only from the remaining active blocks.
In particular, permanent cores are never counted again as newly
sampled blocks. This is why a geometric sum of active counts, rather
than the total number of all coordinates at each stage, is the
appropriate quantity.

Each heavy promotion consumes at least $\ell_*$ original points.
The sets consumed by different heavy promotions, even in different
rounds, are disjoint: their points become inactive and can never
enter a later active block. Thus heavy promotions contribute at
most $n/\ell_*$ cores. Since $\log_2p/3\ge2$, the inequality
$\lfloor u\rfloor\ge u/2$ for $u\ge2$ gives
\[
 \ell_*\ge\frac{\log p}{6\log2}\ge\frac{\log p}{6}.
\]
Terminal promotion contributes fewer than $2^{32}$ cores. A NUS
set has at least two points, and hence
\[
 K\le36864\sqrt n+\frac{6n}{\log p}+2^{32}
 \le2^{33}\left(\sqrt n+\frac n{\log p}\right).
\]

Let $R$ be the number of nonterminal rounds. If $R\ge1$, then
$n\ge2^{32}$ and
\[
 R\le1+\frac{\log n}{\log(3/2)},
 \qquad \frac{\log60}{\log(3/2)}<11;
\]
the second inequality is equivalent to $(3/2)^{11}>60$.
The offset bound before terminal promotion is $60^R$. Adding the
last active unit coordinate therefore gives
\[
 \norm{z_a}_1\le60^R+1\le120n^{11}\le n^{18},
\]
where the last inequality follows from $n\ge2^{32}$.
If $R=0$, every block is still a singleton when it is promoted, so
each final point coordinate is a unit vector and the bound is immediate.

The two geometric estimates have different roles. The decreasing
sequence $m_j\le n(2/3)^j$ makes the total number of sampled anchors
a convergent sum of order $\sqrt n$. The increasing sequence
$H_j\le60^j$ is not summed: only its value at the last round is used.
Since the number of rounds is logarithmic, this last value is
polynomial in $n$. In formulas, when $R\ge1$ the passage from rounds
to height is
\[
 60^R\le60\exp\!\left(
       \frac{\log60}{\log(3/2)}\log n\right)\le60n^{11}.
\]
Using a sum of point heights, or allowing paths of length $m_j$ in
place of the fixed bound four, would give a different recurrence.
The uniform pointwise invariant is what permits the present one.

Finally, each surviving basis vector is the image of an actual point;
two such anchors cannot be the same point because basis vectors are
distinct. The map $\deg(e_a)=1$ annihilates every collision row and
therefore descends through $P\subseteq\Lambda$. It takes value one
on every anchor basis vector, so in these coordinates it is the
coordinate-sum map. Since $\deg(e_a)=1$, we obtain
$\sum_i(z_a)_i=1$ for every $a\in A$, as required.
\end{proof}

\section{Proof of the lower bound}\label{sec:lower}
\begin{proof}[Proof of Theorem~\ref{thm:main-lower}]
Write $X=\log p$ and first suppose that $A$ is inclusion-minimal,
with $n=|A|$. Apply Theorem~\ref{thm:compression}, obtaining
$P\subseteq\Lambda$ and a free quotient $\Z^K$ with actual anchors
$a_1,\ldots,a_K$. We shall compare the rational and modular ranks
of the collision images in this smaller quotient.

Let $q:\Z^A\to\Z^K$ denote the quotient map. Its restriction
to $\Lambda$ has kernel $P$, because $P\subseteq\Lambda$ and
$\ker q=P$. Hence the relevant image is exactly
$q(\Lambda)\simeq\Lambda/P$. Upon taking rational spans this
gives the exact sequence
\[
 0\longrightarrow P\otimes\Q
 \longrightarrow\Lambda\otimes\Q
 \longrightarrow q(\Lambda)\otimes\Q\longrightarrow0.
\]
The rank subtraction below is the dimension formula for this
sequence. Modular singularity will be established separately by
an explicit vector of anchor labels, so no exactness assertion
after reduction modulo $p$ is required.

One can keep track of these maps by a single integer matrix. Let $M$
again denote the original collision matrix, and let $Z$ be the
$n\times K$ matrix whose row indexed by $a$ is $z_a$. Then
\begin{equation}\label{eq:coordinate-identities}
 \begin{gathered}
 Z_{a_i,*}=e_i,\qquad Z\one_K=\one_n,\\
 Z(a_1,\ldots,a_K)^{\mathsf T}=(a)_{a\in A}^{\mathsf T}
       \quad\hbox{over }\F_p.
 \end{gathered}
\end{equation}
The first identity says that the anchor rows form an identity matrix;
the second records degree; the third records the actual labels.
Moreover, for every original point there is the integer relation
\[
 e_a-\sum_{i=1}^K(z_a)_i e_{a_i}\in P.
\]
It follows by applying the quotient map to the left-hand side, and
uses its exact kernel $P$. Thus these are identities in the constructed
integer quotient, rather than identities obtained only after passing
to rational spans. The compressed collision matrix is $MZ$. Its rows
are the images of the original rows, and \eqref{eq:coordinate-identities}
gives both $MZ\one_K=0$ and
$MZ(a_1,\ldots,a_K)^{\mathsf T}=0$ in their respective coefficient
fields. The rank calculation below supplies the additional fact
that its rational rank is exactly $K-1$.

\medskip\noindent\emph{A nonzero determinant divisible by $p$.}
Since $\Z^A/P$ has rank $K$, the subgroup $P$ has rank $n-K$.
Lemma~\ref{lem:complete-rank} gives $\rk_\Q\Lambda=n-1$, whence
\[
 \rk_\Q(\Lambda/P)=(n-1)-(n-K)=K-1.
\]
The actual collision images span this rational space because their
preimages generate $\Lambda$. Every such image has coordinate sum
zero. Deleting the last coordinate is injective on the subspace
with coordinate sum zero: the missing coordinate is the negative
sum of the others. We may therefore choose $K-1$ actual collision
images whose first $K-1$ coordinates form a square integer matrix
$D$ with $\det D\ne0$.

The two operations in this argument should be distinguished. We first
pass to the quotient by the specifically constructed subgroup $P$,
which removes $n-K$ independent rational directions from $\Lambda$.
We then delete one column of its image, which removes no row-space
dimension because every row has coordinate sum zero. Thus the size
$K-1$ of $D$ comes from an exact rank calculation. Choosing any
convenient collection of short collision rows without this rank
calculation would not establish that its determinant is nonzero.

The actual-label homomorphism $e_a\mapsto a\in\F_p$ annihilates
$\Lambda$, so it descends to the quotient and takes value $a_i$
on its $i$th basis vector. In particular $K\ge2$: if $K=1$, the
degree-one condition would give $z_a=(1)$ for every $a$, forcing
all labels to equal the single anchor label. This contradicts
$n\ge2$ and the distinctness of the points.

For a collision image $r=(r_1,\ldots,r_K)$, we have
$\sum_i r_i=0$ over $\Z$ and $\sum_i r_i a_i=0$ over $\F_p$.
Consequently
\[
 \sum_{i<K}r_i(a_i-a_K)=0\quad\hbox{in }\F_p.
\]
Thus $D$ annihilates the nonzero vector
$(a_1-a_K,\ldots,a_{K-1}-a_K)$ modulo $p$. This vector is nonzero
because the anchors are distinct. It follows that $p\mid\det D$,
and the nonzero integer determinant satisfies $p\le|\det D|$.

For example, when $K=3$, two independent compressed rows have the
form $(u,v,-u-v)$ and $(w,z,-w-z)$, so deletion produces
\[
 D=\begin{pmatrix}u&v\\w&z\end{pmatrix},\qquad
 \det D=uz-vw.
\]
Rational independence says that this integer is nonzero. The
label identities say
\[
 D\begin{pmatrix}a_1-a_3\\a_2-a_3\end{pmatrix}=0
       \quad\hbox{over }\F_p.
\]
Multiplying by the adjugate matrix shows that $\det D$ times each
coordinate of this vector is zero modulo $p$. At least one
coordinate is nonzero, so the field property forces
$\det D=0$ modulo $p$. This small case exhibits both ingredients
of prime detection: rational independence excludes the zero
integer determinant, and the nonconstant anchor labels force its
divisibility. The preceding argument applies identically to any
$K\ge2$.

\medskip\noindent\emph{The determinant height estimate.}
Each collision image is a signed sum of four point coordinates,
so its $\ell_1$-norm is at most $4n^{18}$. Deleting a coordinate
does not increase that norm. For any square matrix, the determinant
expansion and the triangle inequality give a bound by the product
of the row $\ell_1$-norms: the permutation products are a subset
of the products obtained by independently choosing one entry in
each row. There is therefore no extra factorial factor. We obtain
\[
 X\le\log|\det D|
 \le(K-1)\log(4n^{18})\le20K\log n,
\]
where $\log4\le2\log n$ because $n\ge2$.

To spell out the determinant estimate, put $d=K-1$ and let
$D_{ij}$ be the entries of $D$. Then
\[
 |\det D|
 \le\sum_{\sigma\in S_d}\prod_{i=1}^d|D_{i,\sigma(i)}|
 \le\sum_{j_1,\ldots,j_d=1}^d\prod_{i=1}^d|D_{i,j_i}|
 =\prod_{i=1}^d\sum_{j=1}^d|D_{ij}|.
\]
The middle sum allows repeated column choices, and all its terms
are nonnegative, which justifies the second inequality. Each row
sum is at most $4n^{18}$. This bound needs only the maximum point
height produced by compression; it does not require the chosen
collision rows to have disjoint supports.

There is also a distinction between the rows used to define $P$ and
the rows selected for $D$. Some defining rows of $P$ are differences
of collisions, and repeated substitutions could make a description
of those rows in an intermediate presentation cumbersome. The rows
of $D$, however, are chosen afresh from the images of original
collisions. They therefore still have the form
$z_a+z_b-z_c-z_d$ and obey the four-point height bound above.
This selection is possible because the images of all original
collisions span $\Lambda/P$. We need no bound on the coefficients
that express an arbitrary element of $\Lambda/P$ in a lattice basis.

In particular, rational independence is used only to select rows.
We do not rescale those rows, clear denominators in them, or replace
them by rational linear combinations when forming $D$. The selected
entries remain integers of the same bounded height. The identity
$p\mid\det D$ then concerns that actual integer determinant. This
separates the use of rational linear algebra to find a nonzero minor
from the integer size estimate applied to the minor once selected.

Using the dimension bound and the relaxed constant $C=2^{38}$,
for which $20\cdot2^{33}\le C$, gives
\begin{equation}\label{eq:lower-implicit-detailed}
 X\le C\left(\sqrt n+\frac nX\right)\log n.
\end{equation}

\medskip\noindent\emph{Solving the inequality.}
In the stated range $p\ge64$, we have $X>4$ and $\log X>1$.
If $n\ge X^2$, then certainly
$n\ge2^{-80}(X/\log X)^2$. In the remaining case $n<X^2$,
we have $n/X<\sqrt n$ and $\log n<2\log X$. Substitution into
\eqref{eq:lower-implicit-detailed} yields
\[
 X\le4C\sqrt n\log X.
\]
All factors are positive, so division and squaring are legitimate:
\[
 n\ge\frac1{16C^2}\left(\frac X{\log X}\right)^2
 =2^{-80}\left(\frac{\log p}{\log\log p}\right)^2.
\]
Finally, an arbitrary finite NUS set contains an inclusion-minimal
NUS subset. Applying the established bound to that subset and using
monotonicity of cardinality proves the same lower bound for $A$.
\end{proof}

The remaining logarithmic loss occurs at a specific step: the quotient
has $O(\sqrt n+n/\log p)$ coordinates, but their polynomial height
costs $O(\log n)$ per row in the determinant estimate. Unit pivots
ensure that the compression is valid over the integers; the present
height estimate does not eliminate this cost.

More explicitly, in the range $n<X^2$ the dimension estimate is
$K=O(\sqrt n)$, whereas the determinant estimate is
$X\le O(K\log n)$. Replacing $\log n$ by $2\log X$ therefore
gives a lower bound for $\sqrt n$ of order $X/\log X$, and squaring
produces the two powers of $\log\log p$ in the denominator.
The exponent limit does not remove these factors: their contribution
to $\log m(p)/\log\log p$ tends to zero even though the factors
themselves are unbounded.

\begin{question}\label{qu:quadratic}
Is there an absolute constant $c>0$ such that
\[
 m(p)\ge c(\log p)^2
\]
for every sufficiently large prime $p$?
\end{question}
An affirmative answer, together with Theorem~\ref{thm:main-upper},
would give $m(p)=\Theta((\log p)^2)$. Corollary~\ref{cor:exponent}
determines the logarithmic exponent but leaves this question open.

\appendix
\section{A sparse ternary upper construction}\label{app:upper}
This appendix is independent of the compression argument. The
balanced-sumset transfer is recorded by Bedert~\cite{Bedert}; the
weakly ternary-balanced version appears in Cao--Yuan~\cite[Appendix~A]{CY}.
We give the transfer proofs for completeness, then construct a sparse
correction set that yields the second coefficient in
Theorem~\ref{thm:main-upper}.

\subsection{From balanced seeds to sets with no unique sums}

\begin{definition}A nonempty set $B\subseteq\F_p$ is balanced if every $b\in B$ admits
\[
 2b=u+v,
 \qquad u,v\in B\setminus\{b\},\quad u\ne v.
\]
\end{definition}

Nedev~\cite{Nedev} gives an algorithm constructing balanced subsets
of $\F_p$ of size at most $(1+o(1))\log_2p$. Together with the
following transfer, this yields an upper bound of order $(\log p)^2$
for the NUS problem. The ternary construction below improves the
leading constant obtained in this way.

\begin{lemma}\label{lem:balanced-sum}
If $B\subseteq\F_p$ is nonempty and balanced, then $B+B$ is NUS.
\end{lemma}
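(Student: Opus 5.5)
The plan is to fix an arbitrary sum $s\in(B+B)+(B+B)$ and exhibit two distinct unordered representations of it by elements of $B+B$. First I will check the size condition: $|B+B|\ge2$. If $|B|=1$, say $B=\{b\}$, balance would require $u,v\in B\setminus\{b\}=\varnothing$, which is impossible. So $|B|\ge2$, and then $B+B$ contains $2b$ and $b+b'$ for distinct $b,b'$, which are different elements.

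Now write $s=x+y$ with $x,y\in B+B$, and choose $x=b_1+b_2$, $y=b_3+b_4$ with $b_i\in B$. The key move is to apply balance to one chosen element, say $b_1$. This gives $2b_1=u+v$ with $u,v\in B\setminus\{b_1\}$ and $u\ne v$. I will try the regrouping
\[
 s=(b_1+b_2)+(b_3+b_4)=\ldots
\]
that replaces a doubled occurrence. The cleanest version is to pair $b_1$ with itself: $x+y=2b_1+(b_2+b_3+b_4-b_1)$. That does not obviously stay in $B+B$, so instead I will use the representation directly. Writing $s=(b_1+b_3)+(b_2+b_4)$ or similar permutations gives alternative pairs, but these may coincide with $\{x,y\}$ as multisets.

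The robust route is this. If $\{b_1+b_3,\,b_2+b_4\}\ne\{x,y\}$, we are done. Otherwise, the rearrangements coincide, which forces equalities among the sums. In the degenerate cases, and in general, I will use balance on an element appearing twice. The model case is $s=2b+2b'$ with $x=y=b+b'$. Here $b+b'+b+b'=(u+b')+(v+b')$ with $u+b'\ne v+b'$, and $u+b'$, $v+b'$ both lie in $B+B$. This differs from $\{x,x\}$ because $u+b'\ne b+b'$.

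In general, I will first rewrite $s=(b_1+b_1)+(b_2+b_3+b_4-b_1)$. Since that is not in $B+B$, I will instead proceed by cases on which of the multisets $\{x,y\}$, $\{b_1+b_3,b_2+b_4\}$, $\{b_1+b_4,b_2+b_3\}$ coincide. If all three coincide, the resulting equalities (for instance $b_1+b_2=b_1+b_3$, and so on) force the quadruple into a shape where some element $b$ appears at least twice across $x$ and $y$, say $x=b+c$ and $y=b+d$. Then $s=(u+c)+(v+d)$ or $(u+d)+(v+c)$ gives new representations. One of these must differ from $\{b+c,b+d\}$, since $u,v\ne b$ and $u\ne v$.

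I expect the main obstacle to be this final case check: showing that the replaced pair is genuinely a different unordered multiset, with special care when $c=d$ or when $u$ or $v$ equals $c$ or $d$.
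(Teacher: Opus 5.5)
Your proposal is correct and is essentially the paper's argument. You recombine the four seed entries, and when the recombinations coincide with the original pair you apply balance at the shared entry $b$ and use one of the regroupings $\{u+c,v+d\}$, $\{u+d,v+c\}$; the paper first narrows the degenerate case to three equal seed entries, but its repair is one of these same two regroupings. The check you flag as the main obstacle is immediate and needs no separate treatment of $c=d$ or of $u,v\in\{c,d\}$: since $u,v\ne b$, both regroupings could equal $\{b+c,b+d\}$ only if $u+c=b+d=v+c$, which forces $u=v$.
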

\begin{proof}
Take any unordered pair of elements of $B+B$, allowing equal entries,
and choose seed representations so that it is $\{a+b,c+d\}$ with
$a,b,c,d\in B$. Recombining the four seed entries gives two pairs
with the same total:
\[
 \{a+c,b+d\},\qquad \{a+d,b+c\}.
\]
If either is different from the original unordered pair, it is
already the required repair. We examine the case where both fail.

Equality of the first pair with the original pair can match its
entries in either order. Cancellation gives $b=c$ or $a=d$,
respectively. Equality of the second pair gives $b=d$ or $a=c$.
The four possible combinations are
\[
 \begin{array}{c|c}
 \text{equalities forced by the two matchings}
   &\text{three coincident entries}\\ \hline
 b=c,\ b=d&b=c=d\\
 b=c,\ a=c&a=b=c\\
 a=d,\ b=d&a=b=d\\
 a=d,\ a=c&a=c=d.
 \end{array}
\]
This case distinction is valid for unordered multisets, including
when one or both displayed pairs are diagonal.
Thus at least three of $a,b,c,d$ coincide. Denote their common
value by $x$ and the remaining value by $y$; if all four coincide,
take $y=x$. The original pair is then $\{2x,x+y\}$.

Balance at $x$ supplies $u+v=2x$, with $u,v$ distinct and neither
equal to $x$. At most one of these two distinct values can equal
$2x-y$. We may thus choose one of them, denoted by $z$, outside
$\{x,2x-y\}$, and let $w$ denote the other. Both $z+y$ and $w+x$
belong to $B+B$, and
\[
 (z+y)+(w+x)=(z+w)+x+y=3x+y.
\]
This is the total of $\{2x,x+y\}$. Moreover $z+y\ne2x$ because
$z\ne2x-y$, and $z+y\ne x+y$ because $z\ne x$. Thus the new
unordered pair cannot equal the old one. The reasoning also covers
$x=y$, so diagonal pairs have been repaired as well. Since the
original pair was arbitrary, $B+B$ is NUS.
To check the cardinality requirement explicitly, fix any $b_0\in B$.
Its midpoint certificate shows that $B$ has at least three elements,
and the injective translation $b\mapsto b+b_0$ maps $B$ into
$B+B$. Thus the sumset has at least two elements as required in
the definition of NUS.
\end{proof}

\subsection{Weak ternary balance}
\begin{definition}
A nonempty $C\subseteq\F_p$ is weakly ternary-balanced if every $x\in C$ has a certificate
\[
 3x=u+v+w,\qquad u,v,w\in C,
 \qquad\{u,v,w\}\ne\{x,x,x\}.
\]
\end{definition}
\begin{lemma}\label{lem:ternary-transfer}
For $p>3$, a weakly ternary-balanced $C$ has $C+C$ NUS.
\end{lemma}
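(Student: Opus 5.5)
The plan is to follow the proof of Lemma~\ref{lem:balanced-sum} almost verbatim, with the ternary certificate taking the place of the midpoint certificate in the final repair.

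\emph{Cardinality.} If $C=\{x\}$, the only possible certificate is $\{x,x,x\}$, which is excluded. Hence $|C|\ge2$. Translating $C$ by any fixed element of $C$ then injects $C$ into $C+C$, so $|C+C|\ge2$.

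\emph{Reduction to one configuration.} Take an arbitrary unordered pair in $C+C$, allowing equal entries, and write it as $\{a+b,c+d\}$ with $a,b,c,d\in C$. The recombined pairs $\{a+c,b+d\}$ and $\{a+d,b+c\}$ have the same total and lie in $C+C$. If either differs from the original pair, we already have a repair. Otherwise the four-case table in that proof applies word for word, and it shows that three of $a,b,c,d$ coincide. Calling the common value $x$ and the remaining value $y$, the pair to repair is $\{2x,x+y\}$, whose total is $3x+y$.

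\emph{Ternary repair.} Take a certificate $3x=u+v+w$ with $\{u,v,w\}\ne\{x,x,x\}$. For each $t\in\{u,v,w\}$, let $s$ be the sum of the other two entries, so $s=3x-t$. The pair $\{3x-t,\,t+y\}$ lies in $C+C$ and has total $3x+y$.

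This pair coincides with $\{2x,x+y\}$ only in two ways. Either $t+y=x+y$, which means $t=x$. Or $t+y=2x$ and $3x-t=x+y$, and both equations say $t=2x-y$.

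So if all three choices of $t$ fail, every entry of the certificate lies in $\{x,z\}$, where $z=2x-y$. Not all entries equal $x$, so exactly $k\in\{1,2,3\}$ of them equal $z$, and $z\ne x$. The certificate then reads
\[
 kz+(3-k)x=3x,\qquad\text{that is,}\qquad k(z-x)=0 \text{ in } \F_p.
\]
Since $p>3$, the integer $k$ is invertible modulo $p$, which forces $z=x$, a contradiction. Therefore some $t$ yields a different representation. This also covers the diagonal case $x=y$, where $z=x$ and the contradiction is immediate.

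The only delicate point is this last step: it is exactly where the hypothesis $p>3$ enters, and it requires checking that the three choices of $t$ exhaust the possible ways the repair could collapse onto the original pair.
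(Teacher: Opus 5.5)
Your proof is correct and takes essentially the same route as the paper. The cardinality argument, the reduction via the four-case table to $\{2x,x+y\}$, and the count $k(z-x)=0$ (the paper's $j(x-y)=0$) are all the same, and that count is exactly where $p>3$ enters, showing that some certificate entry avoids $\{x,2x-y\}$. The only cosmetic difference is that the paper first picks such an entry and then forms the repair $\{z+y,3x-z\}$, whereas you argue by contradiction over all three choices of $t$.
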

\begin{proof}
For any pair of $C+C$, the two recombinations used in the proof
of Lemma~\ref{lem:balanced-sum} either give a repair immediately
or reduce the problem to $\{2x,x+y\}$ with $x,y\in C$.
Choose a nontrivial ternary certificate $3x=u+v+w$ in $C$.
We first show that one of its entries lies outside
$\{x,2x-y\}$.

If $x=y$, this forbidden set is just $\{x\}$, and the assertion
is exactly the nontriviality of the certificate. Otherwise set
$y'=2x-y\ne x$ and suppose all three entries belong to $\{x,y'\}$.
Let $j$ be the number of entries equal to $y'$. The certificate
would give
\[
 3x=(3-j)x+j(2x-y)=3x+j(x-y).
\]
The case $j=0$ is trivial and excluded. For $1\le j\le3$, the
assumption $p>3$ makes $j$ nonzero in $\F_p$, so this equality
would force $x=y$, a contradiction. This proves the assertion
and explains precisely where the characteristic restriction is used.

Choose such an entry $z$. The other two certificate entries have
sum $3x-z$, so this element lies in $C+C$. The pair
\[
 \{z+y,3x-z\}
\]
also lies in $C+C$ and has total $3x+y$. Its first entry differs
from $2x$ and from $x+y$, by the choice of $z$. Hence it differs
as an unordered pair from $\{2x,x+y\}$ and is the desired repair.
Finally, a nonempty weakly ternary-balanced seed cannot be a singleton:
all three entries of a certificate would then equal its only point.
Translation by any fixed seed point injects $C$ into $C+C$, so
$|C+C|\ge|C|\ge2$. Together with the repairs above, this verifies
all parts of the NUS definition.
\end{proof}

\subsection{The finite construction}
\begin{theorem}[Finite sparse-core construction]\label{thm:sparse-core}
Let $p$ be prime, let $t\ge4$ be divisible by four, put $b=3^{t/2}$ and $M_0=3^t=b^2$, and assume $p>30M_0$. There is a weakly ternary-balanced $C\subseteq\F_p$ with
\begin{equation}\label{eq:sparse-size}
 |C|\le17b+4t+20+
 2(t+1)\left(\left\lfloor\log_{3M_0^2}p\right\rfloor+1\right).
\end{equation}
Consequently $m(p)\le\binom{|C|+1}{2}$.
\end{theorem}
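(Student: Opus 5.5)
The plan is to build $C$ as a union of three explicitly described integer pieces, reduced modulo $p$, and to check the certificates piece by piece. The final assertion is then immediate. Lemma~\ref{lem:ternary-transfer} (with $p>30M_0\ge 30\cdot 81>3$) shows that $C+C$ is NUS, and $|C+C|$ is at most the number of unordered pairs from $C$ allowing repetition, namely $\binom{|C|+1}{2}$.

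\emph{The core.} Write residues up to $M_0=b^2$ in base $b=3^{t/2}$. Take a core $Q\subseteq\Z$ of size $O(b)$ containing $0$, $\pm\{0,1,\ldots,b\}$ and $\pm b\{0,1,\ldots,b\}$, plus a bounded number of auxiliary points. The aim is that every integer of absolute value at most about $3M_0$ is a sum of three elements of $Q$. The budget $17b+20$ should come from a few such arithmetic progressions with margins.

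Every core element $x$ with $|x|\le M_0$ gets the certificate $3x=u+v+w$, where $u,v,w$ is a three-term representation of $3x$ from $Q$ chosen to avoid the trivial one $\{x,x,x\}$. Since $|3x|\le 3M_0$ lies in the covered range, such a representation exists, and there is freedom to perturb it, e.g.\ $3x=(x+1)+(x-1)+x$ when those points are present. The hypothesis $p>30M_0$ guarantees that no wrap-around occurs inside the core. Hence all these identities are honest integer identities, and distinctness in $\F_p$ is automatic.

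\emph{The ladders.} Put $R=3M_0^2=3^{2t+1}$ and $D=\lfloor\log_R p\rfloor+1$. For each digit position $i<D$, add two ternary ladders of $t+1$ elements each; this accounts for the term $2(t+1)D$. A ladder is $y_{i,k}=3^k\,y_{i,0}$ for $0\le k\le t$. Each non-top rung has the certificate $3y_{i,k}=y_{i,k+1}+0+0$, which is nontrivial because $y_{i,k}\ne0$.

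The ladders are chained: the top rung at level $i$, multiplied by $3$, must equal the next bottom rung plus two core elements. This is possible because the scale jumps by $R$ between successive positions, and a core correction of size up to $M_0$ absorbs the discrepancy.

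\emph{Closing modulo $p$.} This is the main obstacle. The last top rung $y$ has $3y\equiv r\pmod p$ for some residue $r$, and $r$ must be written as $u+v+w$ with $u,v,w\in C$. I would expand $r$ (or a suitably chosen starting multiplier) in base $R$ with $D$ digits. I would then use the second ladder at each position to realize the digit, so that the wrap-around term is cancelled by a sum of rungs and core elements. The $4t$ term allows a constant number of extra short ladders for the boundary cases.

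What I would try first is to choose the starting value $y_{0,0}$ so that the chain of ladders computes exactly $p$ times a small integer plus a core element. Equivalently, I would run the base-$R$ expansion of $p$ itself downward, which makes the final certificate an identity in $\F_p$.

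The remaining checks are the following. Each certificate must be nontrivial, so no entry triple collapses to $\{x,x,x\}$. Ladder and core elements must be distinct modulo $p$, or else collisions must be merged and the size bound only improves. Finally, the count of elements gives exactly \eqref{eq:sparse-size}.
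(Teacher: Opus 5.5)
Your derivation of $m(p)\le\binom{|C|+1}{2}$ from Lemma~\ref{lem:ternary-transfer} is fine. The gap is in the core step, and it is not the step you flag as the main obstacle.

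\textbf{The extreme points of the core cannot be certified.} Let $x_0=\min Q$ (the same applies to $\max Q$). You explicitly rule out wrap-around inside the core. Then $3x_0=u+v+w$ with $u,v,w\in Q$ forces $u=v=w=x_0$. So the extremes of $Q$ have only the trivial representation, and adding auxiliary points merely moves the extremes.

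The maximum of $Q$ could borrow a bottom rung. But your ladders climb from the core towards $p$, so nothing in the set lies below $x_0$. Hence $x_0$ needs a genuinely modular certificate $u+v+w=3x_0+p$.

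Consider the natural realization of your plan: ladders built downward from about $p/3$, so that $3y_{\mathrm{top}}-p$ is a sum of three core elements. In that version the top closure is the easy part. Every element is at most $y_{\mathrm{top}}$ and $3y_{\mathrm{top}}=p+O(M_0)$, so $u,v,w$ must all lie within $O(M_0)$ of $y_{\mathrm{top}}$. A ladder has only $y_{\mathrm{top}}$ there, and $u=v=w=y_{\mathrm{top}}$ would give $y_{\mathrm{top}}\equiv x_0$, which is false.

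Your closing paragraph (base-$R$ digits realized by a second ladder) does not supply this certificate. Neither does attributing $17b+20$ and $4t$ to unspecified margins and boundary ladders. These are plans, not yet a construction.

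\textbf{How the paper handles this.} It places the core at $a=\lfloor p/3\rfloor$ rather than at $0$. The core $K_0$ is a progression of step $B_0=b+1$ together with a short interval: $5b$ points whose pair sums cover $2a+[-(3M-1)/2,(3M-1)/2]$, a full residue system modulo $3M$. Both ends of the set then carry wrap-around certificates:
the global minimum $L_0=0$ has $3L_0\equiv a+a+(a+\rho)$, and the global maximum $R_0=(p+a+s)/3$ has $3R_0\equiv0+0+(a+s)$.

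The interval $[L_0,R_0]$ is then shrunk from both sides towards the core. Each transition inserts a correction point $q=(R+c+d)/3$ (or $\ell=(L+c+d)/3$), with the core correction chosen modulo $3M$ so that $M\mid q-L$. It follows this with $t$ thirdings using digits in $\{0,2\}$ along the periodic words $2200$ and $20$, which keep the core at relative position $9/10$ or $3/4$. A slot of $2(t+1)$ points shrinks the width by about $3M^2$.

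Once the width is below $60M^2$, the two extreme core points $u,v$ are repaired from the final endpoints. They cannot certify themselves, for exactly the reason above. The repair uses chains $z=(y+2c)/3$ of at most $2t+3$ points per side, plus two filled intervals of at most $6B_0$ points each. This is the origin of $17b+4t+20=(5b+2)+(4t+6)+12(b+1)$.

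What your proposal lacks is a mechanism of this kind. Both ends of the set must carry modular certificates, and there must be elements on both sides of the core close enough to repair its extreme points.
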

\begin{proof}
We construct integer representatives in $[0,p)$ and then reduce them
modulo $p$. Throughout the construction, every point ever inserted
is retained. A moving interval records where subsequent points will
be chosen; shrinking this interval does not discard earlier points.
This distinction ensures that certificates involving retired
endpoints remain available in the final set.

Here is the design of the construction. The core has only $5b$
points but its pair sums realize a whole interval of corrections
of length comparable with $b^2=M$. Those corrections impose the
congruences needed for $t$ successive divisions by three. Two
alternating transitions use at most $2(t+1)$ new points and reduce
the width by a factor approximately $3M^2$. The final completion
handles the only two core points lacking certificates. We first
verify this finite procedure, including a possibly incomplete last
pair of transitions, and only afterwards choose $t$ as a function
of $p$.

For readability write $M=M_0=3^t=b^2$. Since $t\ge4$ is divisible
by four, $b\ge9$ is an odd integer divisible by three and $M\ge81$.
Write $p=3a+\rho$ with $\rho\in\{1,2\}$, and put
\[
 B_0=b+1,\qquad Q_0=(3b-1)/2,
\]
\[
 K_0=\{a+jB_0:-Q_0\le j\le Q_0\}
       \cup([a-B_0,a+B_0]\cap\Z).
\]

\medskip\noindent\emph{Step 1: the core and the initial endpoints.}
The progression has $2Q_0+1=3b$ points and the short interval has
$2B_0+1=2b+3$ points. Their intersection consists of exactly
$a-B_0,a,a+B_0$, so $|K_0|=5b$. Its extreme points are
\[
 u=a-Q_0B_0,\qquad v=a+Q_0B_0,
 \qquad Q_0B_0=\frac{3b^2+2b-1}{2}\le2M.
\]
Every nonextreme progression point is the midpoint of its two
neighbours at distance $B_0$. Every interior point of the short
interval is the midpoint of its neighbours at distance one. The
two ends of that interval are also nonextreme progression points,
since $Q_0\ge2$. Thus every point of $K_0$ except $u,v$ has a
nontrivial midpoint certificate within $K_0$. Adding the point
itself as a third entry converts $2x=r+s$ into $3x=r+s+x$.
Only the two extreme core points will need a later completion.

Choose $s\in\{-1,0,1\}$ so that $3\mid p+a+s$, and insert
\[
 L_0=0,\qquad R_0=(p+a+s)/3.
\]
The short interval in the core contains $a+\rho$ and $a+s$.
The required modular certificates are
\[
 3L_0\equiv a+a+(a+\rho),\qquad
 3R_0\equiv0+0+(a+s)\pmod p.
\]
Indeed the first sum equals $p$, and the second congruence follows
from $3R_0=p+a+s$. These certificates are nontrivial. To check the
positions, $p>30M$ gives $a>(30M-2)/3$, hence $a>2M$.
Also
\[
 R_0-a=\frac{a+\rho+s}{3}>2M,
 \qquad R_0<p.
\]
Together with the core radius bound this proves
$0=L_0<u<v<R_0<p$.

\medskip\noindent\emph{Step 2: two core points realize every correction.}
For any integer $\delta$ with $|\delta|\le(3M-1)/2$, use Euclidean
division by the positive integer $B_0$:
\[
 j=\left\lfloor\frac{\delta}{B_0}\right\rfloor,
 \qquad r_0=\delta-jB_0,\qquad 0\le r_0<B_0.
\]
Since $(3M-1)/2\le Q_0B_0$, we have $-Q_0\le j\le Q_0$.
Consequently both
\[
 c=a+jB_0,\qquad d=a+r_0
\]
are actual core points, and $c+d=2a+\delta$. Their rational mean
\[
 \bar d=\frac{c+d}{2}=a+\frac\delta2
\]
satisfies $|\bar d-a|\le3M/4$. The number $\bar d$ is used only
to estimate positions. It need not be integral and is never inserted
as a new point. The correction itself always uses the two actual
points $c,d$.

This use of Euclidean division also handles negative corrections
without a separate construction. For example, at the smallest
allowed scale $t=4$, one has $b=9$, $M=81$, $B_0=10$ and
$Q_0=13$. The extreme correction $\delta=-121$ is
$(-13)\cdot10+9$, and is realized by the core points $a-130$
and $a+9$. Thus the progression supplies the coarse multiple
and the short interval supplies the remainder. Their individual
sizes are $O(b)$, while the range covered by their sums is $O(b^2)$.

\medskip\noindent\emph{Step 3: the interval states and their transitions.}
Set $\alpha=3/4$, $\beta=9/10$ and
\[
 z_\theta(L,R)=(1-\theta)L+\theta R.
\]

The choices of $\alpha$ and $\beta$ have two purposes, which can
be separated algebraically. First consider an ideal transition in
which the core mean $\bar d$ equals the current centre $a$, with
no positioning error. In an interval with $a=L+\alpha D$, the
right correction $q=(R+2a)/3$ gives
\[
 q-L=\frac{1+2\alpha}{3}D,\qquad
 \frac{a-L}{q-L}=\frac{3\alpha}{1+2\alpha}=\beta.
\]
Starting instead with $a=L+\beta D$, the left correction
$\ell=(L+2a)/3$ gives
\[
 R-\ell=\frac{3-2\beta}{3}D,\qquad
 \frac{a-\ell}{R-\ell}=\frac{\beta}{3-2\beta}=\alpha.
\]
For $\alpha=3/4$ these formulas give $\beta=9/10$, with raw
width factors $5/6$ and $2/5$. Second, both resulting relative
positions have periodic ternary expansions using only the digits
$0$ and $2$. This permits repeated subdivision using an endpoint
already present and one newly inserted endpoint at each step.
The exact estimates below allow $\bar d$ and the centre to differ
from $a$; the ideal calculation is only an explanation of the
parameter choice, not an extra assumption on the construction.

The current interval has integer endpoints and carries one of these
two states. We maintain
\begin{equation}\label{eq:core-centre}
 K_0\subset(L,R),\qquad |z_\theta(L,R)-a|\le3M/4.
\end{equation}
Initially the state is $\alpha$, and a direct calculation gives
\[
 z_\alpha(L_0,R_0)-a
 =\frac{p+a+s}{4}-a=\frac{\rho+s}{4},
\]
so the invariant holds. Before executing any transition, put
$D=R-L$ and stop if $D<60M^2$.

In an $\alpha$ state, select the unique symmetric representative
\[
 \delta\equiv3L-R-2a\pmod{3M},
 \qquad |\delta|\le(3M-1)/2.
\]
Such a representative exists because $3M$ is odd. Obtain $c,d$
from Step 2 and insert
\[
 q=(R+c+d)/3=(R+2a+\delta)/3.
\]
The congruence implies that $q$ is an integer and $M\mid q-L$.
It also gives the certificate $3q=R+c+d$.
More precisely, for some integer $h$ the chosen congruence says
$\delta=3L-R-2a+3Mh$, and hence
\[
 R+2a+\delta=3L+3Mh,\qquad q=L+Mh.
\]
Thus the single congruence modulo $3M$ enforces both the first
division by three and all $t$ subsequent ternary subdivisions.
Choosing a correction only modulo $M$ would guarantee merely
$M/3\mid q-L$, leaving one fewer guaranteed subdivision.
The relevant position and centre identities are
\begin{align*}
 q-L&=\frac56D+\frac23\bigl(\bar d-z_\alpha(L,R)\bigr),\\
 z_\beta(L,q)&=\frac25z_\alpha(L,R)+\frac35\bar d.
\end{align*}
To verify the first identity, use $R=L+D$ and
$z_\alpha(L,R)=L+3D/4$ in
\[
 q-L=\frac{D+2(\bar d-L)}3
     =\frac{D+2(\bar d-z_\alpha(L,R))+3D/2}3.
\]
For the second, substitute $q=(R+2\bar d)/3$ into
$z_\beta(L,q)=L/10+9q/10$. The coefficients of $L,R,\bar d$
become $1/10,3/10,3/5$, respectively, as on the right-hand side.
The difference in parentheses has absolute value at most $3M/2$.
Thus the raw width differs from $5D/6$ by at most $M$. Since
$D\ge60M^2$, it is strictly between zero and $D$, proving
$L<q<R$. In particular the certificate of $q$ is nontrivial,
because it contains the distinct point $R$. The centre identity
is a convex combination, so it preserves the error bound $3M/4$.

Next subdivide $[L,q]$ into thirds $t$ times, retaining the left
third for a digit $0$ and the right third for a digit $2$ in the
periodic word $2200$. If the current endpoints are $L',R'$, a
left-third step inserts $(2L'+R')/3$ and a right-third step inserts
$(L'+2R')/3$. Their certificates are, respectively,
$(L',L',R')$ and $(L',R',R')$. Each is nontrivial since the new
point is strictly between the distinct old endpoints. All old
endpoints are retained in the set.

These subdivision points are integers: the raw width is divisible
by $3^t=M$, and each step divides the width by three, leaving the
divisibility required for the next step. More explicitly, write the
raw width as $W_0=3^t h$ with $h$ a positive integer. Before the
$j$th subdivision, for $1\le j\le t$, the width is
$3^{t-j+1}h$, which is divisible by three. The inserted endpoint
is the current left endpoint plus either one third or two thirds
of this width. It is therefore an integer strictly between the
current endpoints. After the last subdivision the width is $h>0$,
so the interval has not collapsed.
The real number
$\beta=.22002200\ldots{}_3$ lies at relative position $\beta$
again after any multiple of four such digits. Since $4\mid t$,
the final interval has the same $\beta$ centre as $[L,q]$ and
width $(q-L)/M$. This is an identity about affine positions, not
an additional point inserted into the set. For example, four
digits send $[0,81]$ to $[72,73]$, and
$72+(9/10)(73-72)=(9/10)81$.

In a $\beta$ state choose instead
\[
 \delta\equiv3R-L-2a\pmod{3M},
 \qquad |\delta|\le(3M-1)/2,
 \qquad \ell=(L+c+d)/3.
\]
Then $\ell$ is integral, $M\mid R-\ell$, and $(L,c,d)$ is its
certificate. The corresponding identities are
\begin{align*}
 R-\ell&=\frac25D+\frac23\bigl(z_\beta(L,R)-\bar d\bigr),\\
 z_\alpha(\ell,R)&=\frac56z_\beta(L,R)+\frac16\bar d.
\end{align*}
The integrality check is the reflected version of the previous
one: writing $\delta=3R-L-2a+3Mh$ gives $\ell=R+Mh$, so
$R-\ell=-Mh$ is divisible by $M$. The position estimate below
ensures that this width is positive; no sign for $h$ was assumed
when selecting the symmetric representative.
For the displayed identities, the first follows from
\[
 R-\ell=\frac{2(R-\bar d)+D}3
        =\frac{2D}{5}+\frac23(z_\beta(L,R)-\bar d),
\]
because $R-z_\beta(L,R)=D/10$. Also
$z_\alpha(\ell,R)=\ell/4+3R/4=L/12+\bar d/6+3R/4$,
which is the claimed convex combination. The two different
contraction factors, $5/6$ and $2/5$, have product $1/3$;
this is the extra factor of three gained per complete slot.
Again the raw width error is at most $M$, so $L<\ell<R$ and the
certificate is nontrivial. Subdivide $[\ell,R]$ using $t$ digits
of the periodic word $20$. The identity
$\alpha=.202020\ldots{}_3$ and the evenness of $t$ ensure that
the final $\alpha$ centre is unchanged. For four digits the model
interval $[0,81]$ becomes $[60,61]$, with
$60+(3/4)(61-60)=(3/4)81$. Each transition therefore inserts at
most one correction point and $t$ subdivision endpoints.

The periodic-coordinate assertion can be checked without any
infinite-expansion convention. If an interval of width $W$ is
replaced by its left or right third according to a digit
$d\in\{0,2\}$, a point with old relative coordinate $x$ has new
relative coordinate $3x-d$. For the word $2200$, applying these
four transformations to $\beta=9/10$ returns $\beta$; equivalently,
\[
 \beta=\frac2{3}+\frac2{3^2}+\frac{\beta}{3^4}.
\]
For the word $20$, the corresponding identity is
$\alpha=2/3+\alpha/3^2$. These equations prove the centre
preservation used above. Divisibility of $t$ by four accommodates
both words, while $M=3^t$ supplies the required integer endpoints.

More explicitly, for a starting interval $[L,L+W]$ and digits
$d_1,\ldots,d_t\in\{0,2\}$, successive choices of thirds give
\[
 L_t=L+W\sum_{j=1}^t\frac{d_j}{3^j},\qquad
 R_t-L_t=\frac{W}{3^t}.
\]
These formulas follow by induction: the $j$th choice moves the
left endpoint by $d_j W/3^j$ and divides the current width by
three. If $\xi$ is the appropriate periodic coordinate and the
number of digits is a multiple of its period, then
\[
 \xi=\sum_{j=1}^t\frac{d_j}{3^j}+\frac{\xi}{3^t},
 \qquad L_t+\xi(R_t-L_t)=L+\xi W.
\]
Thus the centre is preserved exactly, while the width is divided
by $M$. In particular, there is no accumulated centre error from
the $t$ subdivisions; the positioning error arises only in the
single correction step preceding them.

\medskip\noindent\emph{Step 4: preservation and termination.}
It is useful to separate the error in the centre from the error in
the width. Write $e=z_\theta(L,R)-a$ for the current centre error
and $f=\bar d-a$ for the chosen core-mean error. Both have absolute
value at most $3M/4$. The identities in Step 3, together with exact
centre preservation under subdivision, give the following recurrences:
\[
 \begin{array}{c|c|c}
 \text{transition}&\text{new centre error }e'
      &\text{width error }\epsilon\\ \hline
 \alpha\ \text{to}\ \beta&\frac25e+\frac35f
      &\frac{2(f-e)}{3M}\\[2pt]
 \beta\ \text{to}\ \alpha&\frac56e+\frac16f
      &\frac{2(e-f)}{3M}.
 \end{array}
\]
In each row the coefficients defining $e'$ are nonnegative and sum
to one, so $|e'|\le3M/4$. Also
$|\epsilon|\le(2/(3M))(3M/2)=1$. These estimates hold at every
transition with the same constants. For example, across a complete
pair of transitions with core-mean errors $f_1,f_2$, the centre
error obeys
\[
 e_2=\frac13e_0+\frac12f_1+\frac16f_2.
\]
This is again a convex combination. Consequently one does not add
an error of size $M$ at each of the possibly many transitions:
the centre stays in a fixed interval about $a$ throughout the
construction. The division by $M$ in the width formula is equally
important, converting a raw width error of size at most $M$ into
the absolute error one used in the slot count.

After the subdivisions the two width formulas are
\begin{equation}\label{eq:width-detailed}
 D'=\frac{5D}{6M}+\epsilon
 \quad\hbox{or}\quad
 D'=\frac{2D}{5M}+\epsilon,
 \qquad |\epsilon|\le1,
\end{equation}
for an $\alpha$ to $\beta$ or a $\beta$ to $\alpha$ transition,
respectively. The centre error remains at most $3M/4$ in each
case. A core point lies within $2M$ of $a$, hence within
$11M/4$ of the new centre. In the first case
$D'\ge50M-1$; the nearer endpoint of a $\beta$ interval is
$D'/10$ from its centre, and
\[
 D'/10\ge5M-1/10>11M/4.
\]
In the second case $D'\ge24M-1$, and the nearer endpoint of
an $\alpha$ interval is $D'/4$ from its centre, with
\[
 D'/4\ge6M-1/4>11M/4.
\]
Thus all core points lie strictly inside the new interval,
proving \eqref{eq:core-centre} at the end of every transition.
The subdivision intervals between these final states need no
separate core invariant; their endpoints have their own creation
certificates and lie in the preceding raw interval.

Every new point so far lies inside the previous current interval,
and hence in $[0,R_0]$. The new width is a positive integer and
strictly smaller than $D$, as is also immediate from
\eqref{eq:width-detailed} with $M\ge81$ and $D\ge60M^2$.
There can therefore be only finitely many transitions. On stopping,
$D<60M^2$ and the core is still strictly inside the current interval.

\medskip\noindent\emph{A numerical transition.}
To illustrate the preceding formulas, take $t=4$ and the prime
$p=1000003$. Then $b=9$, $M=81$, $a=333334$, $\rho=1$,
$B_0=10$ and $Q_0=13$. We can choose $s=1$, giving the initial
interval $[0,444446]$. Its width exceeds the transition threshold
$60M^2=393660$, so the first right transition is executed.
The correction is determined modulo $3M=243$:
\[
 3L-R-2a=-1111114\equiv-118\pmod{243}.
\]
The representative $\delta=-118$ is in $[-121,121]$ and has
Euclidean decomposition $-118=(-12)\cdot10+2$. Thus the core
points $c=333214$, $d=333336$ give
\[
 3q=444446+333214+333336=1110996,
 \qquad q=370332=81\cdot4572.
\]
The four digits $2200$ act on the raw interval as follows:
\[
 \begin{array}{c|r|r|r}
 \text{digits used}&\text{left endpoint}&\text{right endpoint}
     &\text{width}\\ \hline
 \text{none}&0&370332&370332\\
 2&246888&370332&123444\\
 22&329184&370332&41148\\
 220&329184&342900&13716\\
 2200&329184&333756&4572.
 \end{array}
\]
For instance the first inserted endpoint satisfies
$3\cdot246888=0+2\cdot370332$, and the third satisfies
$3\cdot342900=2\cdot329184+370332$; these are exactly the
right-third and left-third certificates. The final interval has
\[
 z_\beta=329184+\frac9{10}\,4572
       =a-\frac{176}{5},
 \qquad \frac{176}{5}<\frac{3M}{4}.
\]
It contains the entire core, whose extremes are $333204$ and
$333464$. Its width is below $60M^2$, so the procedure stops
before the following left transition. The completion in Step~6
still has to be performed. This example shows explicitly why the
count in Step~5 must allow an incomplete final slot, while the
certificates and centre invariant already hold after the single
executed transition.

\medskip\noindent\emph{Step 5: counting complete and incomplete slots.}
Group a right transition and the following left transition into
one slot. A slot always starts with a right transition; the last
slot may stop before executing its left transition. For a completed
pair, composing \eqref{eq:width-detailed} gives
\[
 D_2\le\frac{D_0}{3M^2}+1+\frac2{5M}.
\]
Subtracting a fixed amount absorbs the additive error. More
precisely, with $A_0=3M^2$, the inequality
$1+2/(5M)\le4-4/A_0$ implies
\[
 D_2-4\le\frac{D_0-4}{A_0}.
\]
This inequality holds already for $M\ge1$.

The subtraction of four is a device for iterating a contraction
with additive errors. In general an estimate $D_2\le D_0/A+E$
implies $D_2-c\le(D_0-c)/A$ whenever $E\le c(1-1/A)$.
Here $A=3M^2$, $E=1+2/(5M)$ and $c=4$ satisfy that condition.
Moreover the widths at the start of every executed slot exceed
four, so the shifted widths used in the counting argument are
positive. No estimate for an unexecuted transition is being used.

Let $D_{\mathrm{init}}=R_0-L_0$. If slot $j$ starts after $j$
completed pairs, then iterating the shifted inequality gives
\[
 A_0^j(D_j-4)\le D_{\mathrm{init}}-4.
\]
The decision to start that slot means $D_j\ge60M^2$, so
\[
 A_0^j\le\frac{D_{\mathrm{init}}-4}{60M^2-4}<p.
\]
Thus the number of slots is at most
$\lfloor\log_{3M^2}p\rfloor+1$. Charging at most $2(t+1)$
insertions to each slot proves the last term in
\eqref{eq:sparse-size}. This charge is an upper bound even for
an incomplete final slot. The contraction inequality itself has
only been applied to pairs that were actually executed.

There are three possible endings to the procedure. If the initial
width is already below $60M^2$, no slot is executed. Otherwise, each
started slot either contains both transitions or stops after its
right transition because the new width is below the threshold.
In the latter case there can be no later slot. If $J$ slots are
started, their indices are exactly $0,\ldots,J-1$, and every such
index satisfies $j<\log_{3M^2}p$ by the displayed estimate. This
implies $J\le\lfloor\log_{3M^2}p\rfloor+1$, including the case
$J=0$. The insertion charge $2(t+1)J$ is consequently valid without
assuming that the last slot is complete.

\medskip\noindent\emph{Step 6: repairing the two extreme core points.}
The remaining issue is to give $u,v$ nontrivial certificates.
At termination start on the left with the current endpoint $y=L<u$,
whose certificate has already been recorded. While the positive
integer gap $g=u-y$ is at least $5B_0$, choose $j\in\{0,1,2\}$
so that $3\mid y+2(u+jB_0)$. This is possible because
$B_0\equiv1\pmod3$, so those three values represent all residues
modulo three. Set
\[
 c=u+jB_0\in K_0,\qquad z=(y+2c)/3.
\]
Then $z$ is an integer with certificate $(y,c,c)$, and
\[
 u-z=\frac{g-2jB_0}{3},\qquad
 0<u-z\le\frac g3.
\]
The strict positivity follows from $g\ge5B_0$ and $j\le2$.
Also $z>y$, since $c>y$. Hence $y<z<u$, the certificate is
nontrivial, and replacing $y$ by $z$ reduces the gap by at least
a factor of three. These points stay inside the terminal interval.

When $1\le u-y<5B_0$, insert every integer in $[y,u+B_0]$.
There are at most $6B_0$ such integers, so this is also an upper
bound for the number of new points. The left endpoint $y$ retains
its earlier certificate. The right endpoint $u+B_0$ has the
progression midpoint certificate with neighbours $u$ and $u+2B_0$.
Every other point of the filled interval has its two step-one
neighbours in that interval. In particular $u$ now has a nontrivial
midpoint certificate. Converting each midpoint certificate to a
ternary certificate supplies all required witnesses on this side.

For the right side, start with $y=R>v$, choose
$c=v-jB_0$ with $j\in\{0,1,2\}$, and insert $(y+2c)/3$.
More explicitly, choose $j$ to make $y+2(v-jB_0)$ divisible by
three. If $g=y-v\ge5B_0$ and $z=(y+2c)/3$, then
\[
 z-v=\frac{g-2jB_0}{3}>0,\qquad
 z-v\le\frac g3,\qquad z<y.
\]
Here $c\le v<y$ proves the last inequality, and
$g-2jB_0\ge B_0>0$ proves the first. The point $c$ is one of
the last three progression points of $K_0$, so it is available.
Thus $(y,c,c)$ is a nontrivial certificate for an integer point
$z$ strictly between $v$ and $y$. Replacing $y$ by $z$ proves the
same positive factor-three decrease as on the left.
At the end fill $[v-B_0,y]$; the endpoint $v-B_0$ has progression
neighbours $v-2B_0,v$, and all other new interior points use
step-one neighbours. This repairs $v$. All these points belong
to $[0,R_0]$.

These certificates are simultaneous membership statements in the
final set. A point used as a witness need not have received its own
certificate earlier in the procedure. Thus using $u$ in the
certificate of $u+B_0$, while repairing $u$ by the filled interval,
introduces no logical dependency: both identities are verified
directly and all their entries belong to the final set.

It remains to count the two completion chains. If a chain performs
$k$ steps, its final positive integer gap is at most its initial
gap divided by $3^k$. Since that initial gap is less than the
terminal width $D$, we have $3^k<D$. As
\[
 D<60M^2=60\cdot3^{2t}<3^{2t+4},
\]
we get $k\le2t+3$. The chains and two filled intervals together
cost at most $4t+6+12B_0$ points. Including the core and initial
endpoints, their contribution is at most
\[
 5b+2+(4t+6)+12(b+1)=17b+4t+20.
\]
Adding the slot contribution gives \eqref{eq:sparse-size}.

The resulting budget can also be read stage by stage:
\[
 \begin{array}{l|c}
 \text{points charged}&\text{upper bound}\\ \hline
 \text{initial core and two endpoints}&5b+2\\
 \text{two completion chains}&4t+6\\
 \text{two filled intervals}&12(b+1)\\
 \text{transition slots}&
 2(t+1)\bigl(\lfloor\log_{3M^2}p\rfloor+1\bigr).
 \end{array}
\]
Each term has a separate purpose: the core supplies corrections,
the slots shorten the working interval, and the two completion
terms supply certificates at the extreme core points. In particular,
the completion cost depends on $t$ and $b$, rather than on the number
of earlier slots or on their starting widths. This separation is
what permits the final choice of scale.

All counts here are upper bounds on the size of a union of retained
points. A later insertion may coincide with an earlier point, in
which case it costs nothing extra. Such coincidences cannot
invalidate a certificate: the proof of nontriviality at each
creation step uses strict inequalities between its endpoint values,
and membership of all witness values persists in the final set.
This justifies summing the insertion budgets without proving that
the different stages produce disjoint sets.

Finally reduce the retained integer set modulo $p$. Reduction is
injective on $[0,R_0]\subset[0,p)$, so all distinctness and
nontriviality checks survive. Every point has one of the recorded
certificates: an original core certificate, an initial endpoint
certificate, a creation certificate, or a completion certificate.
The set is nonempty and hence is weakly ternary-balanced.
By Lemma~\ref{lem:ternary-transfer}, $C+C$ is NUS. The sum map
from unordered two-element multisets of $C$ onto $C+C$ gives
$|C+C|\le\binom{|C|+1}{2}$, proving the final assertion.
\end{proof}

\subsection{The second-order estimate}
\begin{proof}[Proof of Theorem~\ref{thm:main-upper}]
Write $X=\log p$. We choose the scale so that the core is small
compared with the desired second-order term, while the number of
contraction rounds still has the correct leading order. Put
\[
 s_X=\frac{2\log X-6\log\log X}{\log3},
 \qquad t=4\left\lfloor\frac{s_X}{4}\right\rfloor.
\]
The choice can be anticipated from the finite budget. Its transition
term has leading size
\[
 \frac{2(t+1)X}{(2t+1)\log3}
 =\frac X{\log3}+\frac{X}{(2t+1)\log3}.
\]
Increasing $t$ decreases the second summand, but increases the core
cost $b=3^{t/2}$. A scale close to $2\log X/\log3$ makes the
second summand about $X/(4\log X)$. Subtracting
$6\log\log X/\log3$ in $s_X$ makes the core small enough that
it does not affect that coefficient. The subtraction is negligible
relative to the leading size of $t$.
For all sufficiently large $p$, this is a positive multiple of
four with $t\ge4$. The rounding error is less than four, so
\[
 s_X-4<t\le s_X,
 \qquad t=\frac{2\log X}{\log3}+O(\log\log X).
\]
In particular $t\sim2\log X/\log3$. For the parameters in
Theorem~\ref{thm:sparse-core},
\[
 b=3^{t/2}\le\frac{X}{(\log X)^3},
 \qquad M_0=b^2\le\frac{X^2}{(\log X)^6}\le X^2
\]
eventually. Since $p=\ee^X>30X^2$ for all sufficiently large $X$,
the hypothesis $p>30M_0$ is satisfied. Thus the finite construction
applies to every sufficiently large prime, not just to a subsequence.

We now keep track of the error terms in its cardinality bound.
Since $\log(3M_0^2)=(2t+1)\log3$ and $\lfloor y\rfloor\le y$,
\eqref{eq:sparse-size} implies
\begin{align*}
 |C|&\le17b+4t+20+
       2(t+1)\left(\frac{X}{(2t+1)\log3}+1\right)\\
 &=\frac{2(t+1)X}{(2t+1)\log3}+17b+6t+22\\
 &=\frac X{\log3}
   +\frac{X}{(2t+1)\log3}+17b+6t+22.
\end{align*}
The identity $2(t+1)=(2t+1)+1$ separates the leading term from
the term that will produce the second coefficient. The remaining
terms are smaller than $X/\log X$: indeed
\[
 \frac{b\log X}{X}\le\frac1{(\log X)^2}\longrightarrow0,
 \qquad \frac{t\log X}{X}
       =O\left(\frac{(\log X)^2}{X}\right)\longrightarrow0.
\]
The constant term has the same vanishing property. On the other
hand, $t\sim2\log X/\log3$ gives
\[
 \frac{\log X}{(2t+1)\log3}\longrightarrow\frac14.
\]
Consequently there is a real function $\eta(X)\to1/4$ such that
the upper majorant
\[
 U(X)=\frac X{\log3}+\eta(X)\frac X{\log X}
\]
satisfies $|C|\le U(X)$ for all sufficiently large primes. This
is a bound on $|C|$, not an asserted asymptotic equality for the
size of the particular set constructed.

For example, an explicit choice of this error function is
\[
 \eta(X)=\frac{\log X}{(2t+1)\log3}
       +(17b+6t+22)\frac{\log X}{X},
\]
where $t$ and $b$ are the functions of $X$ defined above. The
first summand tends to $1/4$ and the second tends to zero by
the preceding estimates. Thus the use of $o(1)$ does not hide
a choice of scale depending on a subsequence of primes or on
the eventual value of $|C|$. Rounding $t$ down to a multiple
of four is already included in this formula and in its limit.

We can make the convergence in this step more explicit. Put
$L=\log X$. The definition and rounding of $t$ give
\[
 (2t+1)\log3=4L-12\log L+O(1).
\]
For large $L$ this denominator is bounded below by $2L$. Subtracting
$1/(4L)$ from its reciprocal therefore gives
\[
 \frac1{(2t+1)\log3}
 =\frac1{4L}+O\!\left(\frac{\log L}{L^2}\right).
\]
Indeed, the numerator in the difference of the two fractions is
$12\log L+O(1)$, and their denominator has order $L^2$.
Since $b\le X/L^3$ and $t=O(L)$, the displayed formula for
$\eta(X)$ now yields
\[
 \eta(X)=\frac14+
 O\!\left(\frac{\log L}{L}+\frac1{L^2}+\frac{L^2}{X}\right)
 =\frac14+O\!\left(\frac{\log L}{L}\right).
\]
In the last step $X=\ee^L$ was used. This computation identifies
the effects of scale choice, rounding, and the auxiliary core budget
separately; each has a vanishing contribution to the second
coefficient.

Set $a_0=\log3$. The function $u\mapsto u(u+1)/2$ is increasing
for $u\ge0$, so the transfer and finite counting bound yield
\begin{align*}
 m(p)&\le\frac{U(X)(U(X)+1)}2\\
 &=\frac{X^2}{2a_0^2}
   +\frac{\eta(X)}{a_0}\frac{X^2}{\log X}
   +\frac{\eta(X)^2}{2}\frac{X^2}{(\log X)^2}
   +\frac{X}{2a_0}+\frac{\eta(X)X}{2\log X}.
\end{align*}
Because $\eta(X)$ is bounded, the last three terms are all
$o(X^2/\log X)$. The second term has coefficient tending to
$1/(4\log3)$. Hence
\[
 m(p)\le\frac{X^2}{2(\log3)^2}
       +\left(\frac1{4\log3}+o(1)\right)
          \frac{X^2}{\log X}.
\]
In quantified form, for any prescribed positive error, the
coefficient of $X^2/\log X$ is eventually at most
$1/(4\log3)$ plus that error. Substituting $X=\log p$ and
$\log X=\log\log p$ proves the stated theorem.
\end{proof}

\section{Elementary bounds for all odd primes}\label{app:elementary}
For completeness, we record the finite estimate
\begin{equation}\label{eq:elementary}
 1+\lceil\log_2p\rceil\le m(p)
 \le\min\{p,2\lfloor\log_2p\rfloor^2+2\lfloor\log_2p\rfloor+1\}.
\end{equation}
The lower bound follows from balance, while the upper bound uses binary
prefixes and Lemma~\ref{lem:balanced-sum}. The balanced-set lower bound
is classical; see the introduction of Nedev~\cite{Nedev} and
Bedert~\cite[Corollary~2]{Bedert}. We include a determinant proof.

\begin{lemma}[Balance detects the prime]\label{lem:balanced-lower}
Every NUS set in odd characteristic is balanced. Every nonempty balanced $B\subseteq\F_p$ has
\[
 |B|\ge1+\lceil\log_2p\rceil.
\]
\end{lemma}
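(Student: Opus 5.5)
The lemma has two parts, and I will prove them separately.

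\emph{NUS implies balanced.} Let $A$ be NUS in odd characteristic and $b\in A$. The diagonal pair $\{b,b\}$ has sum $2b$. By the NUS property there is a different representation $2b=u+v$ with $u,v\in A$ and $\{u,v\}\ne\{b,b\}$. If $u=b$ then $v=b$, which is excluded, so $u,v\ne b$. If $u=v$, then $2u=2b$, and since $p$ is odd this forces $u=b$, again excluded. Hence $u\ne v$, both lie in $A\setminus\{b\}$, and $A$ is balanced.

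\emph{Size bound.} I would repeat the determinant argument of Lemma~\ref{lem:rectangle} in a single-set version.

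First, choose a nonempty balanced $B'\subseteq B$ of minimal size; it suffices to bound $|B'|$, so put $k=|B'|$. Note that $k\ge3$.

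Second, consider the midpoint rows $2e_b-e_u-e_v$, one for each certificate within $B'$. I claim the rational right kernel consists of the constants. Take a kernel vector $x$ and its maximum layer $L$. For $b\in L$, the relation $2x_b=x_u+x_v$ together with $x_u,x_v\le x_b$ forces $u,v\in L$. So $L$ is balanced, and minimality gives $L=B'$. Hence the rational rank is $k-1$.

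Third, delete one anchor column $b_0$. As in the rectangle lemma, every row has coordinate sum zero, so deletion is injective on the row space. We can therefore select $k-1$ rows forming a square integer matrix $D$ with $\det D\ne0$. Modulo $p$, the vector $(b-b_0)_{b\ne b_0}$ is annihilated by $D$ and is nonzero, so $p\mid\det D$.

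Fourth, bound the determinant with the row-$\ell^1$ product estimate used in Section~\ref{sec:lower}. Each row has $\ell^1$ norm at most $4$, which gives only $p\le4^{k-1}$, too weak by a factor of two in the exponent. This is the main obstacle: we need $|\det D|\le2^{k-1}$, i.e.\ a factor $2$ per row.

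My first attempt is Hadamard's inequality: each row has Euclidean norm at most $\sqrt{6}$, giving only $6^{(k-1)/2}$, still insufficient. Instead I would expand as in the proof of Lemma~\ref{lem:incidence-minors}. Write each row as $(e_b-e_u)+(e_b-e_v)$, a sum of two unit-difference rows, and use multilinearity of the determinant in the rows. This expresses $\det D$ as a sum of $2^{k-1}$ determinants, each of a matrix whose rows are differences of standard basis vectors (restricted to the retained columns). By Lemma~\ref{lem:incidence-minors} each such determinant lies in $\{0,\pm1\}$, so $|\det D|\le2^{k-1}$.

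Hence $p\le2^{k-1}$. Since $p$ is an odd prime, $p$ is not a power of two, so in fact $p<2^{k-1}$ and $k-1>\log_2p$. Because $k-1$ is an integer, this gives $k-1\ge\lceil\log_2p\rceil$, and therefore $|B|\ge k\ge1+\lceil\log_2p\rceil$.
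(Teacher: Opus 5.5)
Your proof is correct and follows the paper's argument essentially step for step. The shared steps are the minimal balanced subset, the maximum-layer argument giving rational rank $k-1$, and deletion of an anchor column paired with the modular vector $(b-b_0)_{b\ne b_0}$. The bound $|\det D|\le2^{k-1}$ then comes, as in the paper, from splitting each row $2e_b-e_u-e_v$ multilinearly into two unit differences and applying Lemma~\ref{lem:incidence-minors}.

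Your deviations are harmless. Using all certificates rather than one per point changes nothing. The strict inequality from $p$ not being a power of two is unnecessary, since $p\le2^{k-1}$ already gives $\lceil\log_2p\rceil\le k-1$.
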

\begin{proof}
Let $A$ be NUS and $b\in A$. Repair the diagonal pair $\{b,b\}$
by a different pair $\{u,v\}$ with $2b=u+v$. If $u=b$,
cancellation gives $v=b$, contrary to the repair being different;
the same applies to $v=b$. If $u=v$, invertibility of $2$ in
odd characteristic gives $u=v=b$, also a contradiction. Thus
$u,v\in A\setminus\{b\}$ and $u\ne v$, proving balance.

For the numerical bound, choose an inclusion-minimal nonempty
balanced subset $B_0=\{b_1,\ldots,b_k\}$ of $B$. Necessarily
$k\ge3$, since a midpoint certificate needs two distinct other
points. At each $b_i$
choose one midpoint certificate with endpoint indices $u_i,v_i$,
so $2b_i=b_{u_i}+b_{v_i}$, where $u_i,v_i$ are distinct and
neither equals $i$. Form the integer matrix with rows
\[
 r_i=2e_i-e_{u_i}-e_{v_i}.
\]
We prove the required determinant properties directly, using the
incidence-minor lemma rather than a spanning-tree formula.

Suppose $f\in\Q^k$ is in the right kernel. Then
$2f_i=f_{u_i}+f_{v_i}$ for every $i$. If $f_i$ is maximal,
both terms on the right must have the same maximal value. The
maximum layer therefore contains the chosen midpoint witnesses
of each of its points and is itself a nonempty balanced subset
of $B_0$. Minimality makes it all of $B_0$. Hence $f$ is
constant. Conversely constants are in the kernel, so the matrix
has rational rank $k-1$.

Delete the column of $b_k$. Since all row sums are zero, deletion
is injective on the rational row space. Choose $k-1$ of the actual
rows whose restrictions form a nonsingular square matrix $D$.
Modulo $p$, the nonzero vector
$(b_1-b_k,\ldots,b_{k-1}-b_k)$ is in its right kernel: this is
the certificate identity with the anchor label subtracted.
Thus $p\mid\det D$, while $\det D\ne0$ as an integer, and
$p\le|\det D|$.

Each selected row has the decomposition
\[
 2e_i-e_{u_i}-e_{v_i}
 =(e_i-e_{u_i})+(e_i-e_{v_i}),
\]
also after deleting the anchor column. Multilinearity of the
determinant expands $\det D$ into $2^{k-1}$ terms, obtained
by choosing one of the two unit differences in each row.
Every term is an incidence minor and has absolute value at most
one by Lemma~\ref{lem:incidence-minors}. Therefore
\[
 p\le|\det D|\le2^{k-1}.
\]
Taking logarithms to base two gives $\log_2p\le k-1$. Since
$k-1$ is an integer, $\lceil\log_2p\rceil\le k-1$.
Finally $k\le|B|$, which proves the claimed bound.
\end{proof}

\begin{proposition}[All-prime binary construction]Let $p$ be an odd prime. For $L_2=\lfloor\log_2p\rfloor$ and $n_j=\lfloor p/2^j\rfloor$, put
\[
 B=\{0\}\cup\{\pm n_j:1\le j\le L_2\}.
\]
Then $B$ is balanced, $|B|=2L_2+1$, and
\[
 |B+B|\le\min\{p,2L_2^2+2L_2+1\}.
\]
\end{proposition}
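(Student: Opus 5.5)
The plan is to verify the three claims directly from the binary structure of the numbers $n_j=\lfloor p/2^j\rfloor$, working with integer representatives in $[0,p)$ and reducing modulo $p$ only at the end. Two elementary facts drive everything. First, $n_{j-1}\in\{2n_j,2n_j+1\}$ for $j\ge1$, where $n_0=p$. Second, since $2^{L_2}\le p<2^{L_2+1}$, we have $n_{L_2}=1$. Consequently $1=n_{L_2}<n_{L_2-1}<\cdots<n_1=(p-1)/2$. Indeed $n_{j-1}\ge2n_j>n_j$ because $n_j\ge1$, and $n_1=(p-1)/2$ because $p$ is odd.

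\emph{Cardinality.} The elements $n_1,\ldots,n_{L_2}$ are distinct integers in $[1,(p-1)/2]$. Hence their residues, together with the residues of their negatives, are $2L_2$ distinct nonzero elements of $\F_p$. Adding $0$ gives $|B|=2L_2+1$.

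\emph{Balance.} I will give an explicit midpoint certificate for every point. For $0$, use $0=n_1+(-n_1)$; here the two witnesses are distinct and nonzero. For $n_j$ with $2\le j\le L_2$, I split by the parity of $n_{j-1}$.
\begin{itemize}
\item If $n_{j-1}=2n_j$, use $2n_j=n_{j-1}+0$.
\item If $n_{j-1}=2n_j+1$, use $2n_j=n_{j-1}+(-n_{L_2})$, which is valid because $-1=-n_{L_2}\in B$.
\end{itemize}
For $j=1$, we have $2n_1=p-1\equiv0+(-n_{L_2})$. In every case the two witnesses are distinct and both differ from $n_j$. This is clear because $n_{j-1}\ne n_j$, because $0$ and $-1$ are not among the positive residues $n_i$, and because $n_{j-1}\ne0,-1$. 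Negating each certificate handles the points $-n_j$, since $B=-B$. I expect the main care here to be these small edge cases, namely $j=1$ (where $n_0=p\equiv0$), $j=L_2$, and the tiny prime $p=3$. Each should be checked explicitly against the requirement $u\ne v$, $u,v\ne b$.

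\emph{Sumset bound.} The bound $|B+B|\le p$ is trivial. For the other bound, I will count unordered two-element multisets from $B$: there are $\binom{2L_2+2}{2}=(L_2+1)(2L_2+1)$ of them. The $L_2+1$ multisets $\{0,0\}$ and $\{n_j,-n_j\}$ all have sum $0$. Collapsing them to a single sum removes $L_2$ from the count, giving
\[
|B+B|\le(L_2+1)(2L_2+1)-L_2=2L_2^2+2L_2+1.
\]
Combined with Lemma~\ref{lem:balanced-sum}, this yields the upper half of \eqref{eq:elementary}.
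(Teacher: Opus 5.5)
Your proposal is correct and follows essentially the same route as the paper. Both use the recurrence $n_{j-1}=2n_j+\epsilon_{j-1}$ with $n_{L_2}=1$, the certificates $2n_j=n_{j-1}+(-\epsilon_{j-1})$ and $2n_1\equiv0+(-1)$, negation for the negative prefixes, and the collapse of the $L_2+1$ zero-sum multisets among the $\binom{|B|+1}{2}$ unordered pairs; the only cosmetic difference is that you certify $0$ by $n_1+(-n_1)$ rather than by $1+(-1)$.
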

\begin{proof}
Write $L=L_2$. Since $2^L\le p<2^{L+1}$, the last prefix is
$n_L=1$ and every $n_j$ with $1\le j\le L$ is positive.
Euclidean division by two gives, for $1\le j\le L$,
\[
 n_{j-1}=2n_j+\epsilon_{j-1},
 \qquad \epsilon_{j-1}\in\{0,1\},\qquad n_0=p.
\]
Here dividing $\lfloor p/2^{j-1}\rfloor$ by two has quotient
$\lfloor p/2^j\rfloor$, which justifies the recurrence with
the floor functions. Since $n_j\ge1$, the recurrence implies
$n_{j-1}>n_j$. The positive prefixes are thus distinct, and
$n_1=(p-1)/2$ because $p$ is odd.

They also remain distinct from all negative prefixes modulo $p$.
Indeed for $1\le j,k\le L$,
\[
 0<n_j+n_k\le p-1,
\]
so $n_j\not\equiv-n_k\pmod p$. None is zero modulo $p$.
It follows that $|B|=1+2L$.

We verify balance at every kind of point. For $n_1$, the recurrence
starts with $p=2n_1+1$, so
\[
 2n_1\equiv0+(-1)\pmod p.
\]
Both endpoints lie in $B$, since $n_L=1$, and they are distinct
from one another and from $n_1$. For $j\ge2$, use
\[
 2n_j=n_{j-1}+(-\epsilon_{j-1}).
\]
The first endpoint is an earlier positive prefix and exceeds
$n_j$. The second is either $0$ or $-1$, both in $B$.
Their integer representatives show that neither equals $n_j$
modulo $p$, and the endpoints are distinct modulo $p$ as well:
all positive prefixes lie between $1$ and $(p-1)/2$, whereas
the representatives of $0,-1$ are $0,p-1$. These are nontrivial
midpoint certificates. Negating them provides the certificates
for negative prefixes. Finally $2\cdot0=1+(-1)$ is nontrivial
in odd characteristic. Thus $B$ is balanced.

There are exactly
\[
 \binom{|B|+1}{2}=(2L+1)(L+1)
\]
unordered two-element multisets from $B$. At least $L+1$ distinct
such multisets have the same sum zero, namely $\{0,0\}$ and
$\{n_j,-n_j\}$ for $1\le j\le L$. Their distinctness follows
from the disjointness and strict ordering of the prefix lists.
Replacing these $L+1$ representations by their one sum saves
at least $L$ elements in the image of the sum map. Other collisions
can only reduce that image further. Hence
\[
 |B+B|\le(2L+1)(L+1)-L=2L^2+2L+1.
\]
The independent bound $|B+B|\le p$ follows from $B+B\subseteq\F_p$.
Together these give the stated minimum. Lemma~\ref{lem:balanced-sum}
also shows that $B+B$ is NUS, giving the upper half of
\eqref{eq:elementary}.
\end{proof}

\end{document}